\documentclass[acmtocl]{acmtrans2m}

\acmVolume{2}
\acmNumber{3}
\acmYear{01}
\acmMonth{09}

\newcommand{\BibTeX}{{\rm B\kern-.05em{\sc i\kern-.025em b}\kern-.08em
    T\kern-.1667em\lower.7ex\hbox{E}\kern-.125emX}}

\usepackage{graphics,color}
\usepackage{graphicx}
\usepackage{latexsym}
\usepackage{amsmath}
\usepackage{amsfonts}
\usepackage{amssymb}
\usepackage{url}
\usepackage{subfigure}
\usepackage{times}

\numberwithin{equation}{section}

\newtheorem{proposition}{Proposition}
\newtheorem{theorem}{Theorem}

\newtheorem{corollary}{Corollary}

\newcommand{\E}{{\sf E}}

\newcommand{\diag}{\text{\text{diag}}}

\newcommand{\secure}{{\sf susceptible}}
\newcommand{\compromised}{{\sf infected}}

\newcommand{\susceptible}{{\sf susceptible}}
\newcommand{\infected}{{\sf infected}}

\newcommand{\cc}{i}

\newcommand{\oneton}{1,\cdots,n}

\markboth{Shouhuai Xu et al.}{Adaptive Epidemic Dynamics in Networks: Thresholds and Control}

\title{Adaptive Epidemic Dynamics in Networks: Thresholds and Control}

\author{Shouhuai Xu\\University of Texas at San Antonio\\
Wenlian Lu\\Fudan University \and
Li Xu and Zhenxin Zhan\\University of Texas at San Antonio}

\begin{abstract}
Theoretical modeling of computer virus/worm epidemic dynamics
is an important problem that has attracted many studies. However, most existing models are
adapted from biological epidemic ones. Although biological epidemic models
can certainly be adapted to capture some computer virus spreading scenarios (especially when the
so-called homogeneity assumption holds), the problem of
computer virus spreading is not well understood because it has many important perspectives
that are not necessarily accommodated in the biological epidemic models.
In this paper we initiate the study of such a perspective, namely
that of {\em adaptive} defense against epidemic spreading in arbitrary
networks. More specifically, we investigate a non-homogeneous
Susceptible-Infectious-Susceptible (SIS) model where
the model parameters may vary with respect to time.
In particular, we focus on two scenarios we call {\em semi-adaptive} defense
and {\em fully-adaptive} defense, which accommodate implicit and explicit
dependency relationships between the model parameters, respectively.
In the semi-adaptive defense scenario, the model's input parameters are given;
the defense is semi-adaptive because the adjustment is implicitly dependent upon the outcome of virus spreading.
For this scenario, we present a set of sufficient conditions (some are more general or succinct than others)
under which the virus spreading will die out;
such sufficient conditions are also known as {\em epidemic thresholds} in the literature.
In the fully-adaptive defense scenario, some input parameters are not known (i.e.,
the aforementioned sufficient conditions are not applicable)
but the defender can observe the outcome of virus spreading.
For this scenario, we present adaptive control strategies under which
the virus spreading will die out or will be contained to a desired level.
\end{abstract}

\category{K.6.5}{Management of Computing and Information Systems}
{Security and Protection}

\terms{Security}

\keywords{computer malware, virus epidemics, epidemic dynamics, epidemic threshold, complex network, graph}

\begin{document}

\begin{bottomstuff}
Author's address: Shouhuai Xu, Li Xu, and Zhenxin Zhan are with the Department of Computer Science,
University of Texas at San Antonio. Corresponding author: Shouhuai Xu ({\tt shxu@cs.utsa.edu}).
\newline
Wenlian Lu is with the Center for Computational Systems Biology and the
School of Mathematical Sciences, Fudan University ({\tt wenlian@fudan.edu.cn}).
\end{bottomstuff}

\maketitle

\section{Introduction}

Theoretically modeling the spreading dynamics of computer virus (or malware such as worm and bot)
is important for deepening our understanding and for
designing effective, if not optimal, defenses.
We observe, however, that the utility of theoretical modeling in this context is not well understood yet
because existing models are often adapted from biological epidemic ones.
As a consequence, many existing models of computer virus spreading dynamics made the so-called {\em homogeneity}
assumption, which roughly says that the nodes are equally powerful in infecting others.
Realizing the limitation of the assumption, there have been investigations
that aim to weaken the assumption by considering heterogeneous network
topology (where different nodes may have different infection capabilities
because they have different degrees).
Along this line of study, the present paper moves a step further by
exploring models that accommodate realistic scenarios where
the model parameters may change over time (i.e., the parameters are some
functions of time), which captures the fact that both attack and defense are dynamically evolving
or under dynamical adjustment and reflects the persistence of virus spreading.
This allows us to investigate an
important and novel perspective of virus spreading-defense dynamics,
namely that of {\em adaptive} defense against computer virus spreading.

\subsection{Our Contributions}

We investigate a non-homogeneous Susceptible-Infectious-Susceptible (SIS) model in
arbitrary networks (i.e., there is no restriction on the topology of the spreading networks
and the nodes may have different defense or cure capabilities).
The model can accommodate both {\em semi-adaptive} defense and {\em fully-adaptive} defense.
In the semi-adaptive defense scenario, the input parameters in the model are known and can vary
with respect to time (e.g., according to some
deterministic functions of time or according to some stochastic process, but we do not impose
any practical restrictions on the types of functions).
For this scenario, we present a set of sufficient conditions, from general to specific (but more succinct),
under which the virus spreading will die out. We note that
such sufficient conditions are also known as {\em epidemic thresholds} in the literature.

In the fully-adaptive defense scenario, some input parameters are not known and thus
the aforementioned sufficient conditions are not applicable. Nevertheless,
the defender might be able to observe the outcome of virus spreading (i.e., which nodes are
infected at a point in time).
For this scenario, we present adaptive control strategies
under which the virus spreading will die out or will be contained to a desired level
(which is important when, for example, the price to kill the virus spreading may be too high).

Because of the above, our model supersedes previous homogeneous and non-homogeneous models
that offered relevant analytical insights;
the concrete connection will be made when the need arises.
Our analytical results are confirmed via simulation, from which we draw additional observations
that serve as hints for future modeling studies. We discuss the
practical implications of our model and the derived insights as well.

Finally, we note that the present paper is meant to explore
theoretical characterizations of spreading-defense dynamics
while assuming certain parameters can be observed or measured
(e.g., based on extensive data and possibly expert knowledge).
This may not be feasible some times.
Regardless, we believe that such studies are important on their own and represent a necessary
step towards the ultimate characterization of virus spreading-defense dynamics
(which in turn helps design more effective or even optimal defenses).

\subsection{Related Work}

To the best of our knowledge, there are no existing studies on modeling
adaptive spreading-defense dynamics in arbitrary networks.
The work that is most closely related to ours was due to Chakrabarti et al. \cite{WangTISSEC08},
who considered computer virus spreading in arbitrary networks --- a scenario also investigated
in \cite{WangSRDS03,TowsleyInfocom05}. The most important contribution of these studies is
the identification of a sufficient condition (i.e., epidemic threshold) under which the virus spreading will die out;
we will discuss the relationship between their result and ours when the need arises.
Earlier studies either made the homogeneity assumption
(e.g., \cite{KephartOkland91,KephartOkland93}) as in
biological epidemic models (see, for example, \cite{McKendrick1926,Kermack1927,Bailey1975,Anderson1991,HethcoteSIAMRew00}),
or considered specific non-homogeneous networks \cite{WangTISSEC08}.

We should mention prior work that is conceptually or spiritually relevant.
The concept of ``adaptable robust computer systems" was investigated by Bhargava et al. \cite{BhargavaSIGOPS86},
which however has a very different meaning and is for very different purposes.
Also for a different purpose, Zou et al. \cite{ZouSRUTI05} explored the concept of ``adaptive defense" based on cost optimization,
where cost was introduced by false positives and false negatives. In particular, they considered
optimal adaptive defense against worm infection, but is from the perspective of
decision whether or not to block/allow some specific host traffic. As such, it may be possible to
combine their studies and ours because we do not consider cost.

\smallskip

\noindent{\bf Outline}: In Section \ref{sec:model-alpha-eq-0} we present our model as well as the
analytical insights. We report our simulation study in Section \ref{sec:simulation-study-alpha-eq-0}.
We conclude the paper in Section \ref{sec:conclusion} with open problems.

\section{Adaptive Epidemic Dynamics: Model and Analysis}
\label{sec:model-alpha-eq-0}

\subsection{The model}

\noindent{\bf Primary parameters}.
Because we want to accommodate spreading in arbitrary networks, we assume that virus spreads over
a series of finite, dynamical graphs $G(t)=(V,E(t))$, where
$V$, $|V|=n$, is the set of
nodes or vertices and $E(t)$ is the set of (possibly changing) edges or arcs at time $t \ge 0$
(i.e., the topology may change with respect to time).
At any time $t$, an infected node $u$ can directly infect node $v$ if $(u,v)\in E(t)$.
Denote by $A(t)=[a_{vu}(t)]$ the adjacency matrix of $G(t)$, where
$a_{vv}(t)=0$ for all $v\in V$, and $a_{vu}(t)=1$ if and only if $(u,v) \in E(t)$.
Note that this representation naturally accommodates both directed and undirected topologies,
and thus our results equally apply to them.

A node $v \in V$ is \susceptible\ if $v$ is secure but
vulnerable, and \infected\ if $v$ is successfully attacked (i.e., infected and infectious).
At any time $t$, a node $v \in V$ is either \susceptible\ or \infected.
Moreover, a \susceptible\ node may become
\infected\ because of some \infected\ node $u$ where
$(u,v)\in E(t)$, and an \infected\ node may become \susceptible\ because of cure. Since an
\infected\ node may become \susceptible\ again, our model falls into the category of the
so-called SIS models, but our model has the unique feature that
values of the parameters can change with respect to time.

We consider two dependent variables:
$s_v{(t)}$, the probability $v \in V$ is \susceptible\ at time
$t$; $\cc_{v}{(t)}$, the probability $v\in V$ is \infected\ at
time $t$. We consider a continuous-time model, which preserves the invariant
$s_v{(t)} + \cc_{v}{(t)}=1$. The model's input parameters are:
\begin{itemize}
\item $\beta_v(t)$: The probability an \infected\ node $v$ becomes \susceptible\ at time $t$.

\item $\gamma_{uv}(t)$: The probability an \infected\ node $u$ successfully infects a \susceptible\
node $v$ over edge $(u,v)\in E(t)$ at time $t$.
For simplicity, we assume that $\gamma_{uv}(t) = \gamma(t)$ for all $(u,v) \in E(t)$.
\end{itemize}
For the sake of mathematical rigorousness, the $\beta_v(t)$'s and the $\gamma_{uv}(t)$'s, which are probabilities, should be ``measurable"
so as to ensure the existence of solutions to system (\ref{Eq.2.2}) and be
``bounded" so as to ensure the proof of Theorem \ref{thm4} can get through.
To avoid any unnecessary mathematical subtleties, we simply assume that
these parameters are ``boundedly measurable," which has no consequence in practice.
Note that $A(t)$ is naturally bounded.

\smallskip

\noindent{\bf Other parameters and notations}.
Below is a summary of the major notations used in the paper;
notations only occasionally used are explained when the need arises.

\begin{center}
\begin{tabular}{|r|p{.6\textwidth}|}
\hline
model input parameters: &  \\
$A(t)=[a_{vu}(t)]$      & the adjacency matrix of graph $G(t)=(V,E(t))$ where
                          $|V|=n$, and $a_{vu}(t)=1$ if and only if $(u,v)\in E(t)$. Moreover, $a_{vv}(t)=0$ for all $v \in V$.\\
$\beta_{v}(t)$          & the cure capability of node $v$ at time $t$\\
$\gamma(t)$             & the edge infection capability at time $t$\\
\hline
dependent variables: &\\
$s_v{(t)}$          & the probability node $v\in V$ is \secure\ at time $t$ \\
$\cc_{v}{(t)}$      & the probability node $v\in V$ is \compromised\ at time $t$ \\
\hline
intermediate variables: &\\
$\delta_{v}{(t)}$   & the probability \susceptible\ $v\in V$ becomes \infected\ at time $t$ because of \compromised\
                      neighbors $\{u: (u,v) \in E(t)\}$ \\
\hline
other notations: &\\
$\lambda_1$             & the largest (in modulus) eigenvalue of adjacency matrix $A$\\
$\|\cdot\|$ & the 1-norm of vector or matrix \\
$B(t)=\diag[\beta_{1}(t),\cdots,\beta_{n}(t)]$        & the cure probability diagonal matrix \\
\hline
\end{tabular}
\end{center}

\noindent{\bf The state transition diagram and master equation}.
Figure \ref{fig:state-diagram-research-plan-adaptive-alpha=0} depicts the state transition diagram of a node, where
the probability $\delta_v{(t)}$ is given by
\begin{equation}
\label{compromise}
\delta_v {(t)}=1-\prod\limits_{(u,v)\in E(t)}\left[1-\gamma(t) \cdot i_u {(t)}\right].
\end{equation}

\begin{figure}[ht]
\centering
\includegraphics[width=.8\textwidth]{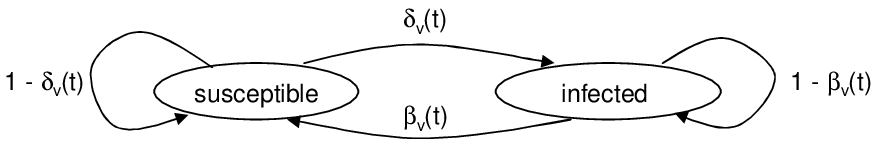}
\caption{State transition diagram of node $v \in V$ at time $t$}
\label{fig:state-diagram-research-plan-adaptive-alpha=0}
\end{figure}

Note that in the derivation of Eq. (\ref{compromise}), we assumed that the events that \infected\ neighbors infect
a node are independent.
Note also that $s_v(t) + \cc_v(t)=1$ for any $t$.
Based on the state transition diagram we obtain the following master equation of dynamics:
\begin{eqnarray}\label{Eq.2}
\frac{d\cc_{v}(t)}{dt} =\bigg[1-\prod_{u\in V}[1-\gamma(t)
a_{vu}(t) \cc_{u} {(t)}]\bigg][1-\cc_{v} {(t)}]-\beta_{v}(t)\cc_{v}
{(t)},
\end{eqnarray}
where $a_{vu}(t)=1$ if and only if $(u,v)\in E(t)$.

\subsection{Sufficient conditions for dying out in the scenario of semi-adaptive defense}
\label{sec:implicit-adaptive-case}

In this subsection we present a set of sufficient conditions under which the virus spreading will die out.
The sufficient conditions are applicable when the model's input parameters,
namely the $\beta_v(t)$'s and $\gamma(t)$ are given. Moreover, it is possible
that $\beta_v(t)$ relies on $\gamma(t)$; for example, the former is an (implicit) function of the latter.
This explains why we call this scenario the semi-adaptive defense.

\begin{theorem}
\label{theorem-1}
\emph{(a general sufficient condition under which virus spreading dies out)}
Consider the following comparison (and linearization) system of Eq. (\ref{Eq.2}):
\begin{eqnarray}
\label{Eq.2.1}
\frac{d x_{v}(t)}{dt} =\sum_{u=1}^{n}a_{vu}(t)\gamma(t)x_{u}(t) -\beta_{v}(t)x_{v} {(t)}.
\end{eqnarray}
Let
$x(t)=[x_{1}(t),\cdots,x_{n}(t)]^{\top}$, we obtain the following compact form of Eq. (\ref{Eq.2.1}):
\begin{eqnarray}\label{Eq.2.2}
\frac{d x(t)}{dt} =\bigg[\gamma(t)A(t) -B(t)\bigg]x(t).
\end{eqnarray}
Denote by $U(t,t')$ the solution matrix of linear system
(\ref{Eq.2.2}), namely that each solution of
linear system (\ref{Eq.2.2}), $x(t)$, with initial condition
$x(t_{0})=x_{0}$, can be written as $x(t)=U(t,t_{0})x_{0}$. Because
the solution $x(t)$ is dependent upon the initial value $x_0$ but
the solution matrix $U(t,t')$ is not, the corresponding
maximum Lyapunov exponent (MLE) is determined by
the solution matrix (rather than by the solution $x(t)$) and can be defined as \cite{Ose}:
\begin{eqnarray*}
\mu=\lim_{t\to\infty}\frac{1}{t}\ln\|U(t,0)\|
\end{eqnarray*}
where $\|\cdot\|$ is (for specificality) the 1-norm of matrix (because $\mu$ is
independent of the choice of the norm).
If $\mu<0$, the virus spreading will die out regardless of the initial infection
configuration;
if $\mu>0$ and system (\ref{Eq.2.2}) is ergodic \cite{Ose}, the virus spreading will not die out in some initial infection
configurations (i.e., ``the equilibrium of $i^*=0$ is unstable" in mathematical terms).
\end{theorem}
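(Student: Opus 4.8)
The plan is to sandwich the nonlinear dynamics (\ref{Eq.2}) between the trivial lower bound $\cc\ge 0$ and the linear comparison system (\ref{Eq.2.2}), and then read off the fate of the infection from the sign of $\mu$. First I would record two structural facts. (i) The cube $[0,1]^n$ is positively invariant for (\ref{Eq.2}): on the face $\cc_v=0$ the right-hand side of (\ref{Eq.2}) is $\ge 0$, and on $\cc_v=1$ it equals $-\beta_v(t)\le 0$, so every solution issued from a probability vector stays one; in particular $\cc_v(t)\ge 0$ for all $t$. (ii) For $\cc\in[0,1]^n$ the Weierstrass product inequality gives $1-\prod_{u}[1-\gamma(t)a_{vu}(t)\cc_u(t)]\le \sum_u \gamma(t)a_{vu}(t)\cc_u(t)$, and $1-\cc_v(t)\le 1$, hence
\[
\frac{d\cc_v(t)}{dt}\le \sum_{u} a_{vu}(t)\gamma(t)\cc_u(t)-\beta_v(t)\cc_v(t).
\]
Since the coefficient matrix $\gamma(t)A(t)-B(t)$ of (\ref{Eq.2.2}) is Metzler (nonnegative off-diagonal entries), system (\ref{Eq.2.2}) is order-preserving, so the comparison principle for cooperative/quasimonotone systems (Kamke) applies: taking $x(0)=\cc(0)$ yields $0\le \cc(t)\le x(t)=U(t,0)\cc(0)$ for all $t\ge 0$.

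For the case $\mu<0$: by the definition of the MLE there are constants $C>0$ and $\varepsilon>0$ with $\|U(t,0)\|\le Ce^{-\varepsilon t}$, so $\|\cc(t)\|\le \|x(t)\|\le \|U(t,0)\|\,\|\cc(0)\|\le C\|\cc(0)\|e^{-\varepsilon t}\to 0$ for every initial configuration $\cc(0)\in[0,1]^n$; hence the infection dies out uniformly in the initial data.

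For the case $\mu>0$ with (\ref{Eq.2.2}) ergodic: I would first check that (\ref{Eq.2.2}) is exactly the variational equation of (\ref{Eq.2}) at the equilibrium $\cc^{\ast}=0$, i.e. the Jacobian of the right-hand side of (\ref{Eq.2}) at $\cc=0$ is $\gamma(t)A(t)-B(t)$ (the product term contributes $\sum_u\gamma(t)a_{vu}(t)\cc_u$ to first order and the factor $1-\cc_v$ contributes $1$). Then I would build a lower comparison near $0$: for any small $\eta>0$ there is $\rho>0$ such that whenever $\|\cc\|\le\rho$ one has $1-\prod_u[1-\gamma(t)a_{vu}(t)\cc_u]\ge(1-\eta)\sum_u\gamma(t)a_{vu}(t)\cc_u$ and $1-\cc_v\ge 1-\rho$, so $d\cc_v/dt\ge\sum_u a_{vu}(t)\tilde\gamma(t)\cc_u-\beta_v(t)\cc_v$ with $\tilde\gamma=(1-\eta)(1-\rho)\gamma$, again a Metzler system with solution matrix $\tilde U$. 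Its top Lyapunov exponent $\tilde\mu$ differs from $\mu$ by an amount tending to $0$ as $\eta,\rho\to 0$, hence stays positive for $\eta,\rho$ small; by ergodicity and the Oseledets multiplicative ergodic theorem there is an initial direction along which $\|\tilde U(t,0)\cdot\|$ grows like $e^{\tilde\mu t}$. Applying the comparison theorem from below (using $\cc(t)\ge 0$ and monotonicity of the Metzler flow) to an initial $\cc(0)$ with a positive component along that direction forces $\|\cc(t)\|$ to leave the ball of radius $\rho$, so $\cc^{\ast}=0$ is not Lyapunov-stable and the infection persists for such configurations.

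The main obstacle is precisely this last step: linking the positive MLE of the linearization to genuine nonlinear instability for a nonautonomous/ergodic cocycle. The MLE is only a limit, so ergodicity is what lets us upgrade it to an Oseledets splitting with an honestly exponentially expanding direction; and one must verify that the small multiplicative perturbation $\gamma\mapsto\tilde\gamma$ does not destroy positivity of the top exponent — not automatic in general, but for Metzler cocycles the top exponent is monotone under scaling of the off-diagonal block and can be controlled via a Perron--Frobenius-type argument on the positive solution operator. Finally one has to run the bootstrap carefully, because the radius $\rho$ in which the lower bound holds shrinks with $\eta$, so the growth rate $\tilde\mu$ must be shown to dominate long enough for the trajectory to escape before the local estimates break down.
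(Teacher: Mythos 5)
Your proposal follows essentially the same route as the paper: bound the nonlinear dynamics (\ref{Eq.2}) from above by the linear comparison system (\ref{Eq.2.2}) via the product inequality, and read off extinction or persistence from the sign of the maximum Lyapunov exponent. What you add is genuinely useful. The paper simply asserts that $\cc_v(0)=x_v(0)$ implies $\cc_v(t)\le x_v(t)$; you justify this through positive invariance of $[0,1]^n$ and the Kamke comparison theorem for cooperative systems, noting that $\gamma(t)A(t)-B(t)$ is Metzler --- exactly the structural hypothesis that makes the componentwise comparison legitimate. Your treatment of the $\mu<0$ half is complete and correct. On the $\mu>0$ half you go further than the paper, which merely declares that instability of the linearization implies instability of (\ref{Eq.2}); your lower comparison system with $\tilde\gamma=(1-\eta)(1-\rho)\gamma$ is the natural device, but, as you yourself flag, the key claim that its top exponent $\tilde\mu$ remains positive is not established: Lyapunov exponents are not continuous under perturbation in general, and the entrywise monotonicity of Metzler solution operators only yields $\tilde U(t,0)\le U(t,0)$, hence $\tilde\mu\le\mu$ --- the wrong direction for your purposes. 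To close this you would need either a quantitative bound of the form $\tilde\mu\ge\mu-C(\eta+\rho)$ for this specific scaling perturbation, or the unstable-manifold machinery for nonuniformly hyperbolic cocycles that the paper's subsequent Discussion gestures at via Pesin theory. Since the paper's own proof of the instability half is a bare assertion, your sketch is at least as rigorous as the published argument, and more honest about where the real difficulty lies.
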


\begin{proof}
Note that
\begin{eqnarray}
1-\prod_{u\in V}[1-\gamma(t)a_{vu}(t)\cc_{u}(t)]\le
\gamma(t)\sum_{u\in V}a_{vu}(t)\cc_{u}(t).\label{ineq}
\end{eqnarray}
If $\cc_{v}(0)=x_{v}(0)$ for all $v\in V$, then the
comparison system (\ref{Eq.2.1}) satisfies that
$\cc_{v}(t)\le x_{v}(t)$ holds for all $t\ge 0$ and $v\in V$.
Since Eq. (\ref{Eq.2.1}) is actually the
linear system of the system (\ref{Eq.2}) at $\cc_{v}=0$ for all
$v\in V$. Therefore, we can conclude that the stability
of Eq. (\ref{Eq.2}) is equivalent to that of Eq. (\ref{Eq.2.1}). In
other words, if all $x_{v}(t)$'s of system (\ref{Eq.2.1}) converge
to zero, then system (\ref{Eq.2}) is stable regardless of the
initial values; on the other hand, if system (\ref{Eq.2.1}) is
unstable, then system (\ref{Eq.2}) is also unstable. If system
(\ref{Eq.2.2}) is ergodic \cite{Ose}, then the limit $\mu$ exists;
otherwise, we can alternatively define
$\mu=\overline{\lim}_{t\to\infty}\frac{1}{t}\ln\|U(t,0)\|$,
where $\overline\lim_{t\to\infty} z(t)$ represents the upper bound
of the limit of $z(t)$ as $t$ goes to infinity, is also guaranteed
to exist. In any case, by applying the definition of MLE, we obtain
the theorem immediately.
\end{proof}

\noindent{\bf Discussion}. The above sufficient condition $\mu<0$
for the virus spreading to die out is actually close to being
necessary, meaning that if $\mu >0$, then the virus spreading will
not die out in {\em most}, rather than just {\em some}, initial
infection configurations. According to the Lyapunov exponent and smooth
ergodic theory developed by \cite{Pesin77} and many
others, $\mu>0$ means that the system (\ref{Eq.2}) possesses an
unstable manifold, which implies that the stable manifold, i.e., the
set of points (i.e., the initial values) starting from which the system (\ref{Eq.2})
converges to the origin, has dimension less than $n$. Therefore,
the stable manifold has Lesbegue measure $0$. That is, except a set
with Lesbegue measure $0$, the virus spreading
never dies out with respect to any initial infection configuration.

The sufficient condition given in Theorem \ref{theorem-1} is very
general because in its derivation no ``amplification" is used and
dynamical topology $E(t)$ is accommodated. However, it requires to,
among other things, solve a system of $n$ linear equations of $n$
variables (equivalently, diagonalizing a $n \times n$ matrix), which
can be quite time-consuming for large $n$ (the number of nodes).
In what follows we give two succinct sufficient conditions,
which can be easily connected to previous state-of-the-art results.

\begin{theorem}
\label{thm4}
\emph{(a succinct sufficient condition)} Suppose
$\beta_v(t)=\beta(t)$ for all $v \in V$ (i.e., all nodes have the
same cure capability) and $E(t)=E$ for any time $t$, meaning that the topology
does not change over time and $A=A(t)$ for any $t$. Consider the system
\begin{eqnarray}\label{Eq.2.3}
\frac{d x(t)}{dt} =\bigg[\gamma(t)A -\beta_v(t)I_{n}\bigg]x(t),
\end{eqnarray}
where $I_n$ is the $n \times n$ identity matrix. Let
\begin{eqnarray*}
\bar{\gamma}=\lim_{t\to\infty}\frac{1}{t}\int_{t_{0}}^{t+t_{0}}\gamma(\tau)d\tau,
~~~\bar{\beta}=\lim_{t\to\infty}\frac{1}{t}\int_{t_{0}}^{t+t_{0}}\beta(\tau)d\tau,
\end{eqnarray*}
and suppose the limits exist and are uniform with respect to
$t_{0}$. Let
$\lambda_{1}$ be the largest (in modulus) eigenvalue of $A$.
 If
\begin{eqnarray}
\lambda_{1}<\frac{\bar{\beta}}{\bar{\gamma}},\label{stable}
\end{eqnarray}
then the virus spreading will die out regardless of the initial infection configuration;
if
\begin{eqnarray}
\lambda_{1}>\frac{\bar{\beta}}{\bar{\gamma}},\label{unstable}
\end{eqnarray}
then the virus spreading will not die out in some initial infection configurations.
\end{theorem}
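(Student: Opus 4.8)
The plan is to exploit that, under the hypotheses of Theorem~\ref{thm4}, all the coefficient matrices of the linear system (\ref{Eq.2.3}) commute with one another: each has the form $\gamma(t)A-\beta(t)I_{n}$, an affine function of the single fixed adjacency matrix $A$. Hence the solution matrix is available in closed form, the maximum Lyapunov exponent $\mu$ of Theorem~\ref{theorem-1} can be computed exactly, and the dichotomy (\ref{stable})--(\ref{unstable}) follows from Theorem~\ref{theorem-1} once one observes that the sign of $\mu$ changes precisely at $\lambda_{1}=\bar\beta/\bar\gamma$.

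Concretely, set $\Gamma(t)=\int_{0}^{t}\gamma(\tau)\,d\tau$. By commutativity, the solution matrix of (\ref{Eq.2.3}) with $t_{0}=0$ is
\[
U(t,0)=\exp\!\Big(\int_{0}^{t}\big[\gamma(\tau)A-\beta(\tau)I_{n}\big]\,d\tau\Big)=\exp\!\Big(-\!\int_{0}^{t}\!\beta(\tau)\,d\tau\Big)\,e^{\Gamma(t)A},
\]
so $\tfrac1t\ln\|U(t,0)\|=-\tfrac1t\!\int_{0}^{t}\!\beta(\tau)\,d\tau+\tfrac1t\ln\|e^{\Gamma(t)A}\|$. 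The first term tends to $-\bar\beta$ by the hypothesis that the time-averages exist (uniformly in $t_{0}$, hence in particular at $t_{0}=0$), which also gives $\Gamma(t)/t\to\bar\gamma$. For the second term I would establish the auxiliary fact
\[
\lim_{s\to\infty}\frac1s\ln\|e^{sA}\|=\lambda_{1},
\]
where $\lambda_{1}=\rho(A)\ge 0$ is the Perron--Frobenius eigenvalue of the nonnegative $\{0,1\}$-matrix $A$: the estimate $\|e^{sA}\|\ge\rho(e^{sA})=e^{s\lambda_{1}}$ gives ``$\ge$'', and passing to a Jordan decomposition $A=PJP^{-1}$ gives $\|e^{sA}\|\le\|P\|\,\|P^{-1}\|\,q(s)\,e^{s\lambda_{1}}$ for a fixed polynomial $q$, hence ``$\le$'' after dividing by $s$ and letting $s\to\infty$. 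Since $\bar\gamma>0$ forces $\Gamma(t)\to\infty$, writing $\tfrac1t\ln\|e^{\Gamma(t)A}\|=\tfrac{\Gamma(t)}{t}\cdot\tfrac{1}{\Gamma(t)}\ln\|e^{\Gamma(t)A}\|$ and applying the auxiliary fact yields $\bar\gamma\lambda_{1}$. Altogether $\mu=\bar\gamma\lambda_{1}-\bar\beta$. (The degenerate case $\bar\gamma=0$ is handled the same way, giving $\mu=-\bar\beta$.)

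It then remains to invoke Theorem~\ref{theorem-1}: if $\lambda_{1}<\bar\beta/\bar\gamma$ then $\mu<0$ and the virus spreading dies out regardless of the initial infection configuration, while if $\lambda_{1}>\bar\beta/\bar\gamma$ then $\mu>0$ and it does not die out for some configurations. In the present setting $\mu$ is a genuine limit rather than merely an upper limit, so the ergodicity proviso in Theorem~\ref{theorem-1} causes no difficulty; alternatively, taking the initial value of (\ref{Eq.2.3}) to be the Perron eigenvector $v$ of $A$ gives a solution of norm $\exp(\Gamma(t)\lambda_{1}-\int_{0}^{t}\beta)\,\|v\|$, which grows like $e^{\mu t}$ and so directly witnesses instability of the equilibrium $i^{*}=0$ when $\mu>0$. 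I expect the only genuinely technical point to be the auxiliary estimate $\tfrac1s\ln\|e^{sA}\|\to\lambda_{1}$: the lower bound is immediate, but the Jordan-form upper bound --- and checking that the polynomial and change-of-basis factors are annihilated by $\tfrac1s\ln(\cdot)$ --- is where a little care is needed; everything else is routine bookkeeping with the time-averages $\bar\gamma$ and $\bar\beta$.
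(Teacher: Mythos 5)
Your proof is correct, but it takes a genuinely different route from the paper's. The paper changes basis to the Jordan form $A=S^{-1}JS$ and runs an induction along each Jordan chain: the leading coordinate is solved explicitly, and each subsequent coordinate is handled by variation of constants, splitting the integral at $t-T$ and using the uniformity in $t_{0}$ of the Ces\`aro limits to bound $\frac{1}{t-a}\int_{a}^{t}[\gamma(\tau)\lambda_{1}-\beta(\tau)]d\tau$ below $-\varphi<0$. You instead observe that under the hypotheses of Theorem \ref{thm4} all coefficient matrices $\gamma(t)A-\beta(t)I_{n}$ commute, so the fundamental matrix of (\ref{Eq.2.3}) is the closed-form exponential $e^{-\int_{0}^{t}\beta}\,e^{\Gamma(t)A}$; combined with the (correct, and correctly proved) lemma $\frac{1}{s}\ln\|e^{sA}\|\to\lambda_{1}$, this yields the exact value $\mu=\bar{\gamma}\lambda_{1}-\bar{\beta}$ of the Lyapunov exponent, and both (\ref{stable}) and (\ref{unstable}) reduce in one stroke to Theorem \ref{theorem-1}. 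Your approach buys several things: it is shorter, it gives the exact exponent rather than only its sign, it dispenses with the uniformity-in-$t_{0}$ hypothesis (you only ever use the averages at $t_{0}=0$), and the Perron-eigenvector solution is a concrete witness of instability that the paper merely asserts. What the paper's coordinatewise induction buys is that it is the argument one would have to make if the coefficient matrices did not commute (heterogeneous $\beta_{v}(t)$ or time-varying $A(t)$), whereas your shortcut depends essentially on the hypotheses $\beta_{v}\equiv\beta$ and $A(t)\equiv A$. Both proofs share the paper's reliance on Theorem \ref{theorem-1} for passing from the linear comparison system back to the nonlinear dynamics, so neither is more rigorous than the other on that final step.
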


\begin{proof}
Consider system (\ref{Eq.2.2}). Let
$A=S^{-1}JS$ be the Jordan canonical form with
\begin{eqnarray*}
J=\left[\begin{array}{lllll}J_{1}&&&&\\
&J_{2}&&&\\&&J_{3}&\\&&&\ddots&\\&&&&J_{K}\end{array}\right],
~J_{k}=\left[\begin{array}{lllll}\lambda_{k}&0&0&\cdots&0\\
1&\lambda_{k}&0&\cdots&0\\0&1&\lambda_{k}&\cdots&0\\
\vdots&\vdots&\vdots&\ddots&\vdots\\0&0&\cdots&1&\lambda_{k}\end{array}\right],
\end{eqnarray*}
where $\lambda_{k}$, $k=1,\cdots,K$, are the distinct eigenvalues of
$A$. Recall that $\lambda_{1}$ is the largest eigenvalue in modulus.
From the Perron-Frobenius theorem \cite{Berman03},  $\lambda_{1}$ is
a real number. Then, letting $y(t)=Sx(t)$, we have
\begin{eqnarray*}
\frac{d y(t)}{dt}=[\gamma(t) J-\beta(t) I_{n}]y(t).
\end{eqnarray*}
Namely,
\begin{eqnarray*}
\frac{d y_{v}(t)}{dt}=[\gamma(t)\lambda_{k_{v}}-\beta(t)]y_{v}(t)+\xi_{v}(t),
\end{eqnarray*}
where $\lambda_{k_{v}}$ is the eigenvalue of $A$ corresponding to
the Jordan block $J_{k_{v}}$ that contains column $v$, and
$\xi_{v}(t)=0$ if the $v$-th row of $S$ is an eigenvector of $A$ and
$\xi_{v}(t)=\gamma(t)y_{v-1}(t)$ otherwise. First, consider $v=1$
corresponding to the eigenvalues $\lambda_{1}$. We have
\begin{eqnarray}
y_{1}(t)=y_{1}(0)\exp\left(\int_{0}^{t}[\gamma(\tau)\lambda_{1}-\beta(\tau)]d\tau\right).\label{comp1-1}
\end{eqnarray}
One can see that the Lyapunov exponent of system (\ref{comp1-1})
is calculated as
\begin{eqnarray*}
\lim_{t\to\infty}\frac{1}{t}\ln||y_{1}(t)||&=&
\lim_{t\to\infty}\frac{1}{t}\ln||y_{1}(0)||+\lim_{t\to\infty}\frac{1}{t}\int_{0}^{t}
[\gamma(\tau)\lambda_{1}-\beta(\tau)]d\tau\\
&=&0+\bar{\gamma}\lambda_{1}-\bar{\beta}<0,
\end{eqnarray*}
which implies $\lim_{t\to\infty}y_{1}(t)=0$.

Assuming $\lim_{t\to\infty}y_{v}(t)=0$ already proved, consider $y_{v+1}(t)$. We have
\begin{eqnarray*}
y_{v+1}(t)=y_{v+1}(0)\exp\left(\int_{0}^{t}k(\tau)d\tau\right)
+\int_{0}^{t}\xi_{v+1}(a)\exp\left(\int_{a}^{t}k(\tau)d\tau\right)
d a
\end{eqnarray*}
where $k(\tau)=\gamma(\tau)\lambda_{k_{v+1}}-\beta(\tau)$.
From condition (\ref{stable}), there exists a sufficiently
small $\epsilon>0$ such that
$(\bar{\gamma}+\epsilon)\lambda_{1}-(\bar{\beta}+\epsilon)<0$.
Let
$\varphi=-(\bar{\gamma}+\epsilon)\lambda_{1}+(\bar{\beta}+\epsilon)$.
One can see that $\varphi>0$. Since the limits
$\int_{t_{0}}^{t_{0}+t}\beta(\tau)d\tau$ and
$\int_{t_{0}}^{t_{0}+t}\gamma(\tau)d\tau$ are uniform with respect
to $t_{0}$, there exists $T>0$ such that
\begin{eqnarray*}
\frac{1}{t-a}\int_{a}^{t}\beta(\tau)d\tau<\bar{\beta}+\epsilon,
~\frac{1}{t-a}\int_{a}^{t}\gamma(\tau)d\tau<\bar{\gamma}+\epsilon
\end{eqnarray*}
hold for any $a$ and $t$ with $t-a>T$. Let ${\mathcal Re} (z)$ denote the real part of a complex number
$z$. Then, we have
\begin{eqnarray*}
\mathcal
Re\bigg(\frac{1}{t-a}\int_{a}^{t}k(\tau)d\tau\bigg)&=&\frac{1}{t-a}\int_{a}^{t}
[\gamma(\tau)\mathcal Re(\lambda_{k_{v+1}})-\beta(\tau)]d\tau\\
&\le&\frac{1}{t-a}\int_{a}^{t} [\gamma(\tau)\lambda_{1}-\beta(\tau)]d\tau \\
&\le&(\bar{\gamma}+\epsilon)\lambda_{1}-(\bar{\beta}+\epsilon)=-\varphi<0
\end{eqnarray*}
for all $t$ and $a$ with $t-a>T$. This implies that the
first term $y_{v+1}(0)\exp(\int_{0}^{t}k(\tau)d\tau)$ converges to
zero. Let $M>0$ be a constant such that
$\sup\limits_{\tau,j}|\gamma(\tau)\lambda_{j}-\beta(\tau)|<M$. For the
second term, we have
\begin{eqnarray*}
&&\bigg|\int_{0}^{t}\xi_{v+1}(a)\exp\left(\int_{a}^{t}k(\tau)d\tau\right)da\bigg| \\
&=&\bigg|\int_{0}^{t-T}\xi_{v+1}(a)\exp\left(\int_{a}^{t}k(\tau)d\tau\right)da
+\int_{t-T}^{t}\xi_{v+1}(a)\exp\left(\int_{a}^{t}k(\tau)d\tau\right)da\bigg|\\
&\le&\int_{0}^{t-T}|\xi_{v+1}(a)|\exp\left(-\varphi
(t-a)\right)da+\int_{t-T}^{t}|\xi_{v+1}(a)|\exp(MT)da.
\end{eqnarray*}
In the case of $\xi_{v+1}(t)=0$, we immediately conclude that
$\lim_{t\to\infty}y_{v+1}(t)=0$. Otherwise, according to the
condition and the L'Hospital principle, we have
\begin{eqnarray*}
\lim_{t\to\infty}\int_{0}^{t-T}|\xi_{v+1}(a)|\exp\big[-\varphi (t-a)\big]da &=&
\lim_{t\to\infty}\frac{\int_{0}^{t-T}|\xi_{v+1}(a)|\exp(\varphi a)da}{\exp(\varphi t)} \\
&=&\lim_{t\to\infty}\frac{|\xi_{v+1}(t-T)|\exp(\varphi(t-T))}{\varphi\exp(\varphi t)}\\
&=&\lim_{t\to\infty}\frac{|y_{v}(t-T)\gamma(t-T)|\exp(-\varphi T))}{\varphi}=0,
\end{eqnarray*}
due to the assumption $\lim_{t\to\infty}y_{v}(t)=0$. We also have
\begin{eqnarray*}
\lim_{t\to\infty}\int_{t-T}^{t}|\xi_{v+1}(a)|\exp(MT)da=0.
\end{eqnarray*}
Therefore, we can conclude $\lim_{t\to\infty}y_{v+1}(t)=0$.

Note that condition (\ref{unstable}) implies that system
(\ref{Eq.2.2}) is unstable. Since system (\ref{Eq.2.2}) is in
fact the linearization system of Eq. (\ref{Eq.2}), we can conclude
that system (\ref{Eq.2}) is unstable under condition
(\ref{unstable}). This completes the proof.
\qed
\end{proof}

If $\beta(t)$ and $\gamma(t)$ are ergodic stochastic processes, from
the multiplicative ergodic theory of the random dynamical
systems \cite{Arn}, we have
the following result as a corollary of Theorem \ref{thm4}.

\begin{corollary}
\label{theorem-main-result-2}
\emph{(another succinct sufficient condition)}
Suppose $\beta_v(t)=\beta(t)$ for all $v \in V$ (i.e., all nodes
have the same cure capability) and $E(t)=E$ for any time $t$ (i.e.,
topology does not change over time). Suppose $\{\beta(t)\}_{t \ge
0}$ and $\{\gamma(t)\}_{t \ge 0}$ are ergodic stochastic processes
(i.e., $\beta(t)$ and $\gamma(t)$ are some random variables). Let
$\E(\beta(0))$ and $\E(\gamma(0))$ be the expectations with respect
to the stationary distributions of the respective ergodic stochastic
process. Suppose the convergences
\begin{eqnarray*}
\E(\beta(0))=\frac{1}{t}\int_{t_{0}}^{t_{0}+t}\beta(\tau)d\tau,~
\E(\gamma(0))=\frac{1}{t}\int_{t_{0}}^{t_{0}+t}\gamma(\tau)d\tau,
\end{eqnarray*}
are both uniform with respect to $t_{0}$ {\em almost surely}.
If
\begin{eqnarray*}
\lambda_{1}<\frac{\E(\beta(0))}{\E(\gamma(0))},
\end{eqnarray*}
the spreading will die out almost surely regardless of the initial infection configuration;
if
\begin{eqnarray*}
\lambda_{1}>\frac{\E(\beta(0))}{\E(\gamma(0))},
\end{eqnarray*}
the spreading will not die out in some initial infection configurations.
\end{corollary}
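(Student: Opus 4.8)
The plan is to obtain Corollary~\ref{theorem-main-result-2} as a \emph{pathwise} consequence of Theorem~\ref{thm4}: for almost every realization of the driving processes, the functions $\beta(\cdot)$ and $\gamma(\cdot)$ are boundedly measurable deterministic functions of time whose running averages converge, uniformly in the lower endpoint $t_0$, to the stationary expectations $\E(\beta(0))$ and $\E(\gamma(0))$; hence Theorem~\ref{thm4} applies along each such path with $\bar{\beta}=\E(\beta(0))$ and $\bar{\gamma}=\E(\gamma(0))$, and the conclusion for the random system follows because it holds on an event of probability one. Concretely, I would first isolate what must be checked: (i) that $\bar{\beta}$ and $\bar{\gamma}$ as defined in Theorem~\ref{thm4} exist almost surely and equal $\E(\beta(0))$ and $\E(\gamma(0))$; (ii) that this convergence is uniform in $t_0$ on a full-measure event; and (iii) that on the same event the realizations are bounded, so that the constant $M$ appearing in the proof of Theorem~\ref{thm4} is finite along each such path.

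Item (i) is Birkhoff's pointwise ergodic theorem applied to the stationary ergodic process $\{\beta(t)\}_{t\ge 0}$ (and likewise to $\{\gamma(t)\}_{t\ge 0}$): $\frac{1}{t}\int_{0}^{t}\beta(\tau)\,d\tau\to\E(\beta(0))$ almost surely, and by stationarity the limit does not depend on the lower endpoint, which handles the $t_0$-shifted averages in the statement; it is moreover explicitly part of the hypothesis. Item (ii), the \emph{uniform}-in-$t_0$ convergence, is precisely the hypothesis of the corollary, assumed to hold almost surely. Item (iii) is guaranteed by the standing ``boundedly measurable'' assumption on the parameter processes. Intersecting the finitely many probability-one events furnished by (i)--(iii) produces a single event $\Omega_0$ with probability one such that, for every $\omega\in\Omega_0$, the deterministic pair $\big(\beta(\cdot,\omega),\gamma(\cdot,\omega)\big)$ satisfies all hypotheses of Theorem~\ref{thm4} with $\bar{\beta}=\E(\beta(0))$ and $\bar{\gamma}=\E(\gamma(0))$.

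Fix $\omega\in\Omega_0$. If $\lambda_1<\E(\beta(0))/\E(\gamma(0))$, then $\lambda_1<\bar{\beta}/\bar{\gamma}$, i.e.\ condition~(\ref{stable}) holds for this realization, so Theorem~\ref{thm4} gives that the $\omega$-indexed (deterministic) spreading dies out for every initial infection configuration; as this holds for all $\omega\in\Omega_0$, the spreading dies out almost surely. The reverse inequality yields~(\ref{unstable}) for every $\omega\in\Omega_0$ and hence instability almost surely. An equivalent route, and the one signalled by the reference to the multiplicative ergodic theory of random dynamical systems~\cite{Arn}, is to regard $U(t,0)$ as a linear cocycle over the ergodic base flow driving $(\beta,\gamma)$: the multiplicative ergodic theorem then provides a top Lyapunov exponent $\mu$ which, by ergodicity of the base, is a deterministic constant almost surely, and the Jordan-block computation carried out in the proof of Theorem~\ref{thm4} identifies it as $\mu=\bar{\gamma}\lambda_1-\bar{\beta}=\E(\gamma(0))\lambda_1-\E(\beta(0))$, so the dichotomy follows from Theorem~\ref{theorem-1}. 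The one delicate point is the bookkeeping behind (ii)--(iii): the proof of Theorem~\ref{thm4} needs the threshold time $T$ (beyond which $\frac{1}{t-a}\int_a^t\beta(\tau)\,d\tau<\bar{\beta}+\epsilon$, and similarly for $\gamma$) and the bound $M$ to be choosable \emph{along the fixed path}; one must verify that the assumed almost-sure uniform convergence indeed delivers such a path-dependent, finite $T$ and $M$ on $\Omega_0$. No uniformity is required \emph{across} $\omega$, so this is not a serious obstacle, but it is exactly where the ``almost surely'' in the hypotheses is genuinely consumed.
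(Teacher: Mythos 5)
Your proposal is correct and follows essentially the same route as the paper, which states the corollary without a separate proof, asserting only that it follows from Theorem~\ref{thm4} via the multiplicative ergodic theory of random dynamical systems~\cite{Arn}. Your pathwise argument --- intersecting the almost-sure events on which the hypotheses of Theorem~\ref{thm4} hold with $\bar{\beta}=\E(\beta(0))$ and $\bar{\gamma}=\E(\gamma(0))$, then applying that theorem realization by realization --- is precisely the intended derivation, made explicit.
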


\noindent{\bf Discussion}. The state-of-the-art sufficient
condition for the dying out of virus spreading in an arbitrary network is
$\lambda_1 < \frac{\beta}{\gamma}$, which was given in \cite{WangTISSEC08}.
In the setting of \cite{WangTISSEC08}, the parameters satisfy that $\beta_v(t)=\beta$ for all $v\in V$ and all $t$,
and $\gamma(t)=\gamma$ for all $t$.
As such, their result is clearly a special case of the above Corollary \ref{theorem-main-result-2}
(note that it is guaranteed that $\E(\gamma(0))\neq 0$),
and thus of the above Theorem \ref{thm4}.

\subsection{Adaptive control in the scenario of fully-adaptive scenario}

In the semi-adaptive defense scenario investigated above, we assumed
that the parameters $\gamma(t)$ and $\beta_v(t)$ are given.
What if they are not given?
In what follows we investigate a representative scenario, where
the defender is not given $\gamma(t)$ but can observe $i_v(t)$.
Specifically, we consider two sufficient conditions of adaptive control:
one under which the virus spreading will die out (Section \ref{sec:adaptive-control-1}),
and another under which the virus spreading
will not die out but will be contained to a desired level of infection (Section \ref{sec:adaptive-control-2}).

\subsubsection{Sufficient condition under which the virus spreading dies out under adaptive control}
\label{sec:adaptive-control-1}

The question we ask is: How should the defender adjust the defense,
namely how $\beta_v(t)$ should depend upon $i_v(t)$, so
that the virus spreading will die out? We assume for concreteness that
$\beta_v(0)=0$ for all $v\in V$; this accounts for the worst-case scenario.

\begin{theorem}
\label{thm_stablize1}
\emph{(characterization of adaptive control strategy under which the virus spreading will die out)}
Suppose without loss of generality $\beta_v(0)=0$ for all $v\in V$.
If
\begin{eqnarray*}
\frac{d\beta_{v}(t)}{dt}=\rho\cc_{v}(t),
\end{eqnarray*}
where $\rho$ is an (almost) arbitrary positive constant, then the virus spreading will die out
regardless of the initial infection configuration.
\end{theorem}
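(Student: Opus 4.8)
The plan is to build a single Lyapunov functional that couples the infection probabilities with the adaptively adjusted cure rates, to show it is non‑increasing and strictly decreasing away from the disease‑free state, and finally to pass from integrability to convergence via a Barbalat‑type argument. Fix a finite constant $b>\sup_{t\ge 0}\gamma(t)\|A(t)\|$; this is possible because $\gamma(t)\in[0,1]$ and the matrix $1$‑norm obeys $\|A(t)\|\le n-1$, so in fact any $b>n-1$ works. Recalling that the dynamics keep $\cc_v(t)\in[0,1]$ for all $v$ and $t$, define
\begin{eqnarray*}
L(t)=\sum_{v\in V}\cc_{v}(t)+\frac{1}{2\rho}\sum_{v\in V}\big(\beta_{v}(t)-b\big)^{2},
\end{eqnarray*}
which is non‑negative and satisfies $L(0)=\sum_{v}\cc_{v}(0)+\frac{nb^{2}}{2\rho}<\infty$.

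Next I would differentiate $L$ along a trajectory of (\ref{Eq.2}) together with the control law $\frac{d\beta_v}{dt}=\rho\cc_v$. The key observation is that the sign‑indefinite term $\sum_{v}\beta_{v}(t)\cc_{v}(t)$ coming from the master equation is cancelled exactly by the term $\frac{1}{\rho}\sum_{v}(\beta_{v}(t)-b)\frac{d\beta_v}{dt}$ coming from the quadratic part of $L$; note that $\rho$ cancels here, which is why it may be taken essentially arbitrary positive. Combining this cancellation with inequality (\ref{ineq}), the bounds $0\le 1-\cc_v\le 1$, and H\"older's inequality applied to $\1^{\top}A(t)i(t)$ (with $i(t)=[\cc_{1}(t),\cdots,\cc_{n}(t)]^{\top}$), one is left with
\begin{eqnarray*}
\frac{dL(t)}{dt}\ \le\ \gamma(t)\sum_{v,u}a_{vu}(t)\cc_{u}(t)-b\sum_{v}\cc_{v}(t)\ \le\ \big(\gamma(t)\|A(t)\|-b\big)\|i(t)\|\ \le\ -\eta\,\|i(t)\|,
\end{eqnarray*}
where $\eta:=b-\sup_{t}\gamma(t)\|A(t)\|>0$.

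Since $L\ge 0$ and $\frac{dL}{dt}\le 0$, the functional $L$ is bounded, hence each $\beta_{v}(t)$ is bounded (by $b+\sqrt{2\rho L(0)}$); combined with $\cc_v\le 1$ and (\ref{ineq}) this makes $\frac{d\cc_v}{dt}$ uniformly bounded in $t$, so each $\cc_v(\cdot)$ is uniformly continuous on $[0,\infty)$. Integrating the displayed inequality gives $\int_{0}^{\infty}\|i(\tau)\|\,d\tau\le L(0)/\eta<\infty$, and in particular $\int_0^\infty\cc_v(\tau)\,d\tau<\infty$ for every $v$. A non‑negative, uniformly continuous function whose integral over $[0,\infty)$ is finite must tend to $0$ (Barbalat's lemma), so $\cc_v(t)\to 0$ for all $v$; that is, the virus spreading dies out regardless of the initial infection configuration.

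I expect the main obstacle to be discovering the coupled functional $L$ that produces the exact cancellation of $\sum_{v}\beta_{v}\cc_{v}$ --- once this is in place, the differential inequality is governed solely by the finite topological constant $b$ and the remaining estimates are routine. The one other step requiring care is the final ``integrable $\Rightarrow$ vanishing'' passage, which forces one to first extract the uniform boundedness of the $\beta_{v}(t)$ (and hence the uniform continuity of the $\cc_v$) from the monotonicity of $L$, rather than concluding convergence directly.
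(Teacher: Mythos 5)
Your proof is correct and uses the same central device as the paper: the coupled Lyapunov functional $\sum_v \cc_v + \frac{1}{2\rho}\sum_v(\beta_v - b)^2$ with $b$ exceeding $(n-1)\sup_t\gamma(t)$, the exact cancellation of the sign-indefinite $\sum_v\beta_v\cc_v$ term against the derivative of the quadratic part, and the resulting bound $\frac{dL}{dt}\le -\eta\sum_v\cc_v$. The only genuine divergence is the endgame: the paper invokes the LaSalle invariance principle to land in the set $\{\sum_v\cc_v=0\}$, whereas you integrate the differential inequality and pass from $\int_0^\infty\|i(\tau)\|\,d\tau<\infty$ to $\cc_v(t)\to 0$ via uniform continuity (Barbalat). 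Your route is arguably the safer one here, since the system is non-autonomous ($\gamma(t)$ and $A(t)$ vary in time) and the classical LaSalle principle requires care in that setting; the price you pay is the extra bookkeeping of first extracting boundedness of the $\beta_v$ and of $\frac{d\cc_v}{dt}$, which you do correctly.
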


\begin{proof}
Define a candidate Lyapunov function with respect to the infectious
probabilities $\cc=[\cc_{1},\cdots,\cc_{n}]^{\top}$ and the cure
capabilities
$\tilde{\beta}=[\beta_{1},\cdots,\beta_{n}]^{\top}$:
\begin{eqnarray*}
V(\cc,\tilde{\beta})=\sum_{v=1}^{n}i_{v}(t)+\frac{1}{2\rho}\sum_{v=1}^{n}(\beta_{v}(t)-\beta_{0,v})^{2}.
\end{eqnarray*}
Let $\beta_{0,v}$, $v=\oneton$, be positive constants satisfying
\begin{eqnarray*}
\beta_{0,v}>(n-1)\sup_{t}\gamma(t)+1,~\forall~v=\oneton
\end{eqnarray*}
owing to the fact that $1\ge \gamma(t)\ge 0$.
Due to inequality (\ref{ineq}), differentiating $V(\cc,\tilde{\beta})$ gives
\begin{eqnarray*}
\frac{dV(i,\tilde{\beta})}{dt}&=&\sum_{v=1}^{n}\left[1-\prod_{(u,v)\in
E{(t)}}(1-\gamma(t)\cc_{u}(t))\right](1-\cc_{v}(t))-\sum_{v=1}^{n}\beta_{v}(t)\cc_{v}(t) \\
&&+\sum_{v=1}^{n}(\beta_{v}(t)-\beta_{0,v})\cc_{v}(t)\\
&\le&\sum_{v=1}^{n}\gamma(t)\sum_{u=1}^{n}a_{vu}(t)\cc_{u}(t)-\sum_{v=1}^{n}\beta_{v}(t)\cc_{v}(t)
+\sum_{v=1}^{n}(\beta_{v}(t)-\beta_{0,v})\cc_{v}(t)\\
&\le&\sum_{v=1}^{n}\gamma(t)\sum_{u=1}^{n}a_{vu}(t)\cc_{u}(t)-\sum_{v=1}^{n}\beta_{0,v}\cc_{v}(t)\\
&\le&\gamma(t)(n-1)\sum_{u=1}^{n}\cc_{u}(t)-\sum_{v=1}^{n}\beta_{0,v}\cc_{v}(t) \\
&\le& -\sum_{v=1}^{n}\cc_{v}(t).
\end{eqnarray*}
According to the LaSalle principle \cite{LaSalle1960}, the
system converges to the largest invariant set
$\{(\cc,\tilde{\beta}):\sum_{v=1}^{n}\cc_{v}=0\}$, which implies
$\lim_{t\to\infty}\cc_{v}(t)=0$ for all $v=\oneton$. \qed
\end{proof}

The above Theorem \ref{thm_stablize1} has the following implications.

\begin{proposition}
\label{proposition-adaptive-1}
We can bound from above the accumulated number of infected nodes in the long run (a node
is counted multiple times if it is infected at multiple points in time) as follows:
\begin{eqnarray}
\label{est1} \int_{0}^{\infty}\sum_{v=1}^{n}\cc(\tau)d\tau\le
\frac{(n-1)^{2}\gamma_{m}}{\rho}+\frac{n-1}{\rho}\sqrt{\sum_{v=1}^{n}\cc_{v}(0)
+\frac{(n-1)^{3}\gamma_{m}^{2}}{2\rho}}
\end{eqnarray}
where $\gamma_{m}=\sup\limits_{t}\gamma(t)$ and
$\beta_{0,m}=\min\limits_{v}\beta_{0,v}$.
\end{proposition}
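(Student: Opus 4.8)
The plan is to \emph{integrate} the Lyapunov estimate already obtained inside the proof of Theorem~\ref{thm_stablize1} and then optimize over the auxiliary constants $\beta_{0,v}$. Observe first that that proof in fact proves a slightly stronger inequality than the one it records: with $V(\cc,\tilde{\beta})=\sum_{v=1}^n\cc_v(t)+\frac{1}{2\rho}\sum_{v=1}^n(\beta_v(t)-\beta_{0,v})^2$, the very same chain of estimates --- which uses only~(\ref{ineq}), the out-degree bound $\sum_v a_{vu}(t)\le n-1$, and $\gamma(t)\le\gamma_m$ --- gives, for \emph{any} positive reals $\beta_{0,v}$,
\[
\frac{dV}{dt}\;\le\;\sum_{v=1}^n\bigl[(n-1)\gamma_m-\beta_{0,v}\bigr]\cc_v(t)\;\le\;-\bigl(\beta_{0,m}-(n-1)\gamma_m\bigr)\sum_{v=1}^n\cc_v(t),\qquad \beta_{0,m}:=\min_v\beta_{0,v}.
\]
(The particular margin ``$1$'' used in Theorem~\ref{thm_stablize1} plays no role here; only the differential inequality above, not the convergence conclusion, is needed below.)

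\noindent Next I would integrate over $[0,T]$ and use $V\ge0$: this yields $\bigl(\beta_{0,m}-(n-1)\gamma_m\bigr)\int_0^T\sum_v\cc_v(\tau)\,d\tau\le V(0)-V(T)\le V(0)$ for every $T$, so the accumulated infection is finite and, since $\beta_v(0)=0$,
\[
\int_0^\infty\sum_{v=1}^n\cc_v(\tau)\,d\tau\;\le\;\frac{V(0)}{\beta_{0,m}-(n-1)\gamma_m}\;=\;\frac{\sum_{v=1}^n\cc_v(0)+\frac{1}{2\rho}\sum_{v=1}^n\beta_{0,v}^2}{\beta_{0,m}-(n-1)\gamma_m}
\]
whenever $\beta_{0,m}>(n-1)\gamma_m$. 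Raising any $\beta_{0,v}$ above the common minimum $\beta_{0,m}$ only enlarges the numerator while leaving the denominator unchanged, so the best choice is $\beta_{0,v}\equiv\beta_{0,m}$, which reduces the bound to the single-variable expression $\bigl(\sum_{v}\cc_v(0)+\frac{n}{2\rho}\beta_{0,m}^2\bigr)/\bigl(\beta_{0,m}-(n-1)\gamma_m\bigr)$.

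\noindent The last step is to minimize this over $\beta_{0,m}>(n-1)\gamma_m$. Setting the derivative to zero gives a quadratic whose admissible root is $\beta_{0,m}^{*}=(n-1)\gamma_m+\sqrt{(n-1)^2\gamma_m^2+\frac{2\rho}{n}\sum_v\cc_v(0)}$, and substituting it back makes the fraction collapse into a closed form with exactly the structure of~(\ref{est1}): a term linear in $(n-1)\gamma_m/\rho$, plus $\frac1\rho$ times a square root of $\sum_v\cc_v(0)$ plus a $\gamma_m^2/\rho$ term. (An equivalent, optimization-free route: $\beta_v$ is nondecreasing from $0$ and bounded, since $\frac{1}{2\rho}(\beta_v-\beta_{0,v})^2\le V\le V(0)$, so $\rho\int_0^\infty\cc_v=\beta_v(\infty)$ exists and the target equals $\frac1\rho\|\beta(\infty)\|$; then $\|\beta(\infty)\|\le\|\beta_0\|+\sqrt n\,\|\beta(\infty)-\beta_0\|_2$ with $\|\beta(\infty)-\beta_0\|_2^2\le2\rho V(\infty)\le2\rho V(0)$, and taking the smallest admissible $\beta_{0,v}=(n-1)\gamma_m$ gives the same bound.) I expect the only real labor to be matching the precise numerical constants in~(\ref{est1}): these are sensitive to the exact admissible $\beta_{0,v}$'s chosen and to which elementary norm inequality is invoked, so pinning them down is patient but entirely routine algebra, whereas the qualitative content --- finiteness of the accumulated infection, with the displayed dependence on $n$, $\gamma_m$, $\rho$ and the initial load $\sum_v\cc_v(0)$ --- is already delivered by the two displays above.
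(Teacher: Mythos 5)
Your proposal follows essentially the same route as the paper's own proof: integrate the Lyapunov differential inequality $\frac{dV}{dt}\le[\gamma_m(n-1)-\beta_{0,m}]\sum_v\cc_v(t)$ over $[0,\infty)$, use $V\ge0$ and $\beta_v(0)=0$ to get $\int_0^\infty\sum_v\cc_v\le V(0)/(\beta_{0,m}-(n-1)\gamma_m)$, take all $\beta_{0,v}$ equal, and minimize the resulting single-variable fraction over $\beta_0$. The only differences are bookkeeping-level (your $\tfrac{n}{2\rho}\beta_0^2$ versus the paper's $\tfrac{n-1}{2\rho}\beta_0^2$ in the numerator, and the exact constants emerging from the optimization), which you correctly flag as routine algebra.
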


\begin{proof}
From the proof of Theorem \ref{thm_stablize1}, we have
\begin{eqnarray*}
\frac{dV(i,B)}{dt}\le
[\gamma_{m}(n-1)-\beta_{0,m}]\sum_{v=1}^{n}\cc_{v}(t),
\end{eqnarray*}
which implies
\begin{eqnarray*}
\int_{0}^{t}\sum_{v=1}^{n}\cc_{v}(\tau)d\tau\le\frac{1}{\beta_{0,m}-\gamma_{m}(n-1)}[V(0)-V(t)]\le
\frac{1}{\beta_{0,m}-\gamma_{m}(n-1)}V(0).
\end{eqnarray*}
If $\beta_{v}(0)=0$ for all $v=\oneton$ and all
$\beta_{v,0}$ are the same and are thus denoted by $\beta_{0}$, we have the
following estimation:
\begin{eqnarray*}
\int_{0}^{\infty}\sum_{v=1}^{n}\cc(\tau)d\tau\le\frac{1}{\beta_{0}-\gamma_{m}(n-1)}\big[\sum_{v=1}^{n}
\cc_{v}(0)+\frac{1}{2\rho}(n-1)\beta_{0}^{2}\big].
\end{eqnarray*}
By setting
$$\beta_{0}=a+\frac{\sqrt{\sum_{v=1}^{n}\cc_{v}(0)+1/(2\rho)(n-1)^{3}\gamma_{m}^{2}}}{1/(2\rho)(n-1)},$$
we obtain the minimum of the right-hand side and thus complete the proof. \qed
\end{proof}

\noindent{\bf Physical meanings of Proposition \ref{proposition-adaptive-1}}.
The term at the left-hand side of inequality (\ref{est1}) captures,
in addition to the aforementioned estimation of the total number of infected nodes
in the network (counting repetition) over time, the convergence rate of the adaptive control strategy.
This allows us to draw the following insights:
(i) The larger $\rho$, the faster the
virus spreading will die out;
(ii) the larger degree of initial infection,
the slower the virus spreading will die out;
(iii) the larger edge infection probability $\gamma$,
the slower the virus spreading will die out.

\subsubsection{Adaptive control under which the virus spreading will not die out but will be contained to a desired level}
\label{sec:adaptive-control-2}

In the above we have given some sufficient condition on adjusting the defense
or $\beta_v(t)$ so that the virus spreading will die out.
What if the required $\beta_v(t)$ cannot be achieved, meaning that
we may not expect that the virus spreading die out?
This is possible because
the defense may not be as good as one may wish or because of budget limitation.
In this case, we ask an alternative interesting question:
What it takes so that $i_v(t)$ can converge or be contained
to some pre-determined level of infection $\cc_{v}^{*}$?

\begin{theorem}
\label{theorem-control-to-specified-state}
\emph{(characterization of adaptive control strategy under which the virus spreading will be contained to a desired level of infection)}
Consider the following variant of the master equation Eq. (\ref{Eq.2}),
\begin{eqnarray}
\label{eq-adaptive-control-2}
\frac{d\cc_{v}(t)}{dt}&=&\bigg[1-\prod_{(u,v)\in E}[1-\gamma
\cdot \cc_{u} {(t)}]\bigg](1-\cc_{v} {(t)})-\beta_{v}(t)\cc_{v} {(t)}+w_{v}
\end{eqnarray}
For any $1\ge \cc_{v}^{*}> 0$, $v=\oneton$, letting
\begin{eqnarray*}
\beta_{v}^{*}=\frac{[1-\prod_{(u,v)\in E}(1-\gamma \cdot
\cc_{u}^{*})](1-\cc_{v}^{*})}{\cc_{v}^{*}},
\end{eqnarray*}
if we use the following adaptive control strategy
\begin{eqnarray*}
\frac{d\beta_{v}(t)}{dt}=\rho[\cc_{v}(t)-\cc_{v}^{*}]\cc_{v}(t), ~~~~~~
w_{v}=\eta[\cc_{v}^{*}-\cc_{v}(t)],
\end{eqnarray*}
where $\rho$ is an (almost) arbitrary positive constant, and $\eta$
is a positive constant with
$\eta+\min\limits_{v}\beta_{v}^{*}>1+(1-\gamma)\lambda_{1}$ where
$\lambda_{1}$ is the largest (in modulus) eigenvalue of the
adjacency matrix $A$, we have
$\lim_{t\to\infty}\cc_{v}(t)=\cc_{v}^{*}$.
\end{theorem}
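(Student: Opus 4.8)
\noindent\textbf{Proof plan}. The plan is to run a Lyapunov-function argument in the spirit of Theorem~\ref{thm_stablize1}, but with a \emph{quadratic} ``distance-to-target'' term rather than a linear one, because we now want $\cc_v(t)$ to converge to the positive level $\cc_v^*$ instead of to $0$. First I would check that $(\cc^*,\beta^*)$, where $\cc^*=(\cc_1^*,\dots,\cc_n^*)^{\top}$ and $\beta^*=(\beta_1^*,\dots,\beta_n^*)^{\top}$, is an equilibrium of the controlled system~(\ref{eq-adaptive-control-2}): the input $w_v=\eta[\cc_v^*-\cc_v]$ vanishes at $\cc_v=\cc_v^*$, and the stated formula for $\beta_v^*$ is precisely the one that makes the right-hand side vanish there. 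I would also confirm (or simply assume) that $\cc_v(t)$ remains in $[0,1]$ --- at $\cc_v=0$ the drift is $\ge\eta\cc_v^*>0$, and at $\cc_v=1$ it is $\le 0$ provided $\beta_v\ge 0$ --- so that the elementary bounds $0\le F_v(\cc)\le 1$ and $0\le 1-\cc_v\le 1$ used below are valid, writing $F_v(\cc):=1-\prod_{(u,v)\in E}(1-\gamma\cc_u)$.

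Next I would take the candidate Lyapunov function
\begin{eqnarray*}
V(\cc,\tilde\beta)=\frac{1}{2}\sum_{v=1}^{n}(\cc_v(t)-\cc_v^*)^{2}+\frac{1}{2\rho}\sum_{v=1}^{n}(\beta_v(t)-\beta_v^*)^{2}
\end{eqnarray*}
and differentiate it along trajectories. The key algebraic point is that the control law $\frac{d\beta_v(t)}{dt}=\rho[\cc_v(t)-\cc_v^*]\cc_v(t)$ carries the extra factor $\cc_v(t)$ \emph{exactly} so that the $\beta_v$-dependent terms cancel: the term $-\beta_v\cc_v(\cc_v-\cc_v^*)$ produced by $-\beta_v\cc_v$ in $\frac{d\cc_v}{dt}$ is annihilated by the term $(\beta_v-\beta_v^*)(\cc_v-\cc_v^*)\cc_v$ coming from the $\beta$-part of $V$, leaving only $-\beta_v^*\cc_v(\cc_v-\cc_v^*)$. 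Substituting the defining relation $\beta_v^*\cc_v^*=F_v(\cc^*)(1-\cc_v^*)$ and regrouping, one obtains
\begin{eqnarray*}
\frac{dV}{dt}=\sum_{v=1}^{n}(\cc_v-\cc_v^*)\big[F_v(\cc)(1-\cc_v)-F_v(\cc^*)(1-\cc_v^*)\big]-\sum_{v=1}^{n}(\beta_v^*+\eta)(\cc_v-\cc_v^*)^{2}.
\end{eqnarray*}

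The third step is to bound the first (nonlinear) sum, and this is where I expect the real work to lie. Writing
\begin{eqnarray*}
F_v(\cc)(1-\cc_v)-F_v(\cc^*)(1-\cc_v^*)=(1-\cc_v^*)\big(F_v(\cc)-F_v(\cc^*)\big)-F_v(\cc)(\cc_v-\cc_v^*),
\end{eqnarray*}
the term $-F_v(\cc)(\cc_v-\cc_v^*)^{2}$ lies in $[-(\cc_v-\cc_v^*)^{2},0]$, while for $F_v(\cc)-F_v(\cc^*)$ one expands the product $\prod_{(u,v)\in E}(1-\gamma\cc_u)$ telescopically, using $0\le 1-\gamma\cc_u\le 1$, to bound it by a combination $\sum_{(u,v)\in E}c_{vu}\,|\cc_u-\cc_u^*|$ that is linear in the neighbours' deviations. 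Summing over $v$ and invoking $\sum_{u,v}a_{vu}e_v e_u\le\lambda_1\sum_v e_v^{2}$ with $e_v=|\cc_v-\cc_v^*|$ (valid since $\lambda_1$ is real and nonnegative by Perron--Frobenius), these estimates should assemble into
\begin{eqnarray*}
\frac{dV}{dt}\le\big[1+(1-\gamma)\lambda_1-\eta-\min_{v}\beta_v^*\big]\sum_{v=1}^{n}(\cc_v-\cc_v^*)^{2},
\end{eqnarray*}
which is the bound that the hypothesis $\eta+\min_v\beta_v^*>1+(1-\gamma)\lambda_1$ is designed to beat; thus $\frac{dV}{dt}\le 0$, with equality only at $\cc=\cc^*$.

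Finally I would apply the LaSalle invariance principle~\cite{LaSalle1960} as in Theorem~\ref{thm_stablize1}: $V$ is radially unbounded, so trajectories stay bounded (in particular the $\beta_v$'s), and they converge to the largest invariant set contained in $\{(\cc,\tilde\beta):\frac{dV}{dt}=0\}=\{\cc=\cc^*\}$; on that set $\frac{d\cc_v}{dt}\equiv 0$ together with $w_v=0$ forces $\beta_v\cc_v^*=F_v(\cc^*)(1-\cc_v^*)=\beta_v^*\cc_v^*$, whence $\beta_v=\beta_v^*$ since $\cc_v^*>0$. Hence the invariant set is the single equilibrium $(\cc^*,\beta^*)$ and $\lim_{t\to\infty}\cc_v(t)=\cc_v^*$ for all $v$. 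The main obstacle is the third step: one must squeeze out of $F_v(\cc)(1-\cc_v)-F_v(\cc^*)(1-\cc_v^*)$ an estimate that is linear in the neighbours' deviations, so it folds into the adjacency quadratic form, \emph{and} has constants sharp enough to yield exactly the threshold $1+(1-\gamma)\lambda_1$ in the hypothesis; secondary points are that for a genuinely directed $A$ one should use the symmetric part $(A+A^{\top})/2$ in the quadratic-form inequality, and that one should verify the sign of $\beta_v(t)$ --- which the control law may decrease --- does not spoil positive invariance of $[0,1]^n$.
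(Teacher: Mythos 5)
Your proposal follows essentially the same route as the paper's proof: a Lyapunov function quadratic in both $\cc_v-\cc_v^*$ and $\beta_v-\beta_v^*$, the observation that the extra factor $\cc_v(t)$ in the adaptive law is precisely what cancels the unknown $\beta_v(t)$ terms, a telescoping bound on $\prod_{(u,v)\in E}(1-\gamma \cc_u)-\prod_{(u,v)\in E}(1-\gamma \cc_u^*)$ that is linear in the neighbours' deviations, an eigenvalue estimate on the resulting adjacency quadratic form, and LaSalle. The one structural difference is that the paper weights the Lyapunov function by a positive diagonal $P=\diag[P_1,\cdots,P_n]$ chosen (via Perron--Frobenius) so that $PA+A^{\top}P\le\lambda_1 P$; this is exactly the device that resolves the ``secondary point'' you raise about directed $A$, whereas your unweighted version requires $A$ symmetric (the symmetrized inequality you propose need not hold with $\lambda_1$ for a general nonnegative $A$). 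As for what you call the main obstacle --- forcing the estimate to produce the constant $1+(1-\gamma)\lambda_1$ --- your suspicion is warranted and you should not try to overcome it: the paper's own proof bounds the nonlinear difference by $|\cc_v-\cc_v^*|+\gamma\sum_{(u,v)\in E}|\cc_u-\cc_u^*|$ and arrives at the condition $\eta+\min_v\beta_v^*>1+\gamma\lambda_1$, not $1+(1-\gamma)\lambda_1$; the $(1-\gamma)$ in the theorem statement appears to be a misprint. Indeed your own decomposition, which discards the nonpositive term $-F_v(\cc)(\cc_v-\cc_v^*)^2$, yields an off-diagonal coefficient of $\gamma\lambda_1$ and is if anything slightly sharper than the paper's. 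With the weighted $P$ inserted (or $A$ assumed symmetric) and the threshold read as $1+\gamma\lambda_1$, your argument closes and coincides with the paper's.
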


\begin{proof}
From Perron-Frobenius theorem \cite{Berman03}, we have that there
exist some positive constants $P_{1},\cdots,P_{n}$ such that $P={\rm
diag}[P_{1},\cdots,P_{n}]$ satisfies
$[PA+A^{\top}P]\le\lambda_{1}P$. Consider the candidate Lyapunov
function with respect to $\cc=[\cc_{1},\cdots,\cc_{n}]^{\top}$ and
$\tilde{\beta}=[\beta_{1},\cdots,\beta_{n}]^{\top}$:
\begin{eqnarray*}
V(\cc,\tilde{\beta})=\frac{1}{2}\sum_{v=1}^{n}P_{v}[\cc_{v}-\cc_{v}^{*}]^{2}+\frac{1}{2\rho}\sum_{v=1}^{n}P_{v}
(\beta_{v}-\beta_{v}^{*})^{2}.
\end{eqnarray*}
Differentiating $V(x,\tilde{\beta})$ gives
\begin{eqnarray*}
&&\frac{d{V}(\cc,\tilde{\beta})}{dt}\\
&=&\sum_{v=1}^{n}P_{v}[\cc_{v}(t)-\cc_{v}^{*}]\bigg\{[1-\prod_{(u,v)\in
E}(1-\gamma \cdot \cc_{u} {(t)})](1-\cc_{v}
{(t)})-\beta_{v}(t)\cc_{v} {(t)}-\eta(\cc_{v}-\cc_{v}^{*})\bigg\}\\
&&+\sum_{v=1}^{n}P_{v}(\beta_{v}(t)-\beta_{v}^{*})(\cc_{v}(t)-\cc_{v}^{*})\cc_{v}(t)\\
&=&\sum_{v=1}^{n}P_{v}[\cc_{v}(t)-\cc_{v}^{*}]\bigg\{\big[1-\prod_{(u,v)\in
E}(1-\gamma \cdot \cc_{u} {(t)})\big](1-\cc_{v} {(t)}) \\
&&-\big[1-\prod_{(u,v)\in E}(1-\gamma \cdot \cc_{u}^{*}
)\big](1-\cc_{v}^{*})
-\beta_{v}(t)\cc_{v} {(t)}-\beta_{v}^{*}\cc_{v}^{*}+\eta(\cc_{v}-\cc_{v}^{*})\bigg\}\\
&&+\sum_{v=1}^{n}P_{v}(\beta_{v}(t)+\beta_{v}^{*})(\cc_{v}(t)-\cc_{v}^{*})\cc_{v}(t).
\end{eqnarray*}
Note that
\begin{eqnarray*}
&&\bigg|[1-\prod_{(u,v)\in E}(1-\gamma \cdot \cc_{u}
{(t)})](1-\cc_{v} {(t)})-[1-\prod_{(u,v)\in E}(1-\gamma
\cdot \cc_{u}^{*} )](1-\cc_{v}^{*})\bigg|\\
&\le&\bigg|(\cc_{v}(t)-\cc_{v}^{*})\prod_{(u,v)\in
E}(1-\gamma\cc_{v}(t))\bigg| +\bigg|(1-\cc_{v}^{*})\big[\prod_{(u,v)\in
E}(1-\gamma\cc_{u}(t))-\prod_{(u,v)\in
E}(1-\gamma\cc_{u}^{*})\big]\bigg|\\
&\le&|\cc_{v}(t)-\cc_{v}^{*}|+\sum_{(u,v)\in
E}|\cc_{u}(t)-\cc_{u}^{*}|\prod_{(u_{1},v)\in
E,~u_{1}>u}|1-\gamma\cc_{u_{1}}^{*}|\prod_{(u_{2},v)\in
E,~u_{2}<u}|1-\gamma\cc_{u_{2}}(t)|\\
&\le&|\cc_{v}(t)-\cc_{v}^{*}|+\gamma\sum_{(u,v)\in
E}|\cc_{u}(t)-\cc_{u}^{*}|.
\end{eqnarray*}
Thus, we have
\begin{eqnarray*}
\frac{d{V}(\cc,\tilde{\beta})}{dt} &\le&\sum_{v=1}^{n}
P_{v}|\cc_{v}(t)-\cc_{v}^{*}|^{2}+\sum_{v=1}^{n}\sum_{u=1}^{n}\gamma
P_{v}a_{vu}|\cc_{v}(t)-\cc_{v}^{*}||\cc_{u}(t)-\cc_{u}^{*}|\\
&&-\sum_{v=1}^{n}\beta_{v}^{*}P_{v}(\cc_{v}(t)-\cc_{v}^{*})^{2}
-\eta\sum_{v=1}^{n}P_{v}[\cc_{v}(t)-\cc_{v}^{*}]^{2}.
\end{eqnarray*}
Let $z(t)=[z_{1}(t),\cdots,z_{n}(t)]^{\top}$ with
$z_{v}(t)=|\cc_{v}(t)-\cc_{v}^{*}|$. Since $\beta_{v}^{*}\ge 0$, we have
\begin{eqnarray*}
\frac{d{V}(\cc,\tilde{\beta})}{dt}&\le& z^{\top}\big[
P+\gamma(PA+A^{\top}P)/2-\eta P-B^{*}P\big]z\\
&\le&
z^{\top}\big[1+\gamma\lambda_{1}-\eta-\min_{v}\beta_{v}^{*}\big]P
z<0~,\forall~z\ne 0,
\end{eqnarray*}
where $B^{*}={\rm diag}[\beta_{1},\cdots,\beta_{n}]$. Due to the
LaSalle principle \cite{LaSalle1960}, we have that the system will
converge to the largest invariant set in
$\{(\cc,\tilde{\beta}):~\sum_{v=1}^{n}[\cc_{v}-\cc_{v}^{*}]^{2}=0\}$,
which implies that $\lim_{t\to\infty}\cc_{v}(t)=\cc_{v}^{*}$ for all
$v=\oneton$. \qed
\end{proof}

\noindent{\bf Discussion}. The above theorem is quite general because of the term $w_v$
in Eq. (\ref{eq-adaptive-control-2}). If $w_v \neq 0$, the adaptive control strategy must be used with caution
because we must guarantee that its value does have physical meanings. In general, $w_v \le 0$ would be reasonable;
in our simulation study (Section \ref{sec:simulation-study-alpha-eq-0}),
we set $w_v = 0$ for simplicity. On the other hand, the theorem is necessarily
based on the premise that $\gamma(t)=\gamma$, namely that $\gamma(t)$ does not vary
with respect to time, because of the way $\beta_v^*$ is defined.
In our simulation study (Section \ref{sec:simulation-study-alpha-eq-0}), we will show that the result is quite robust, meaning
that even if $\gamma(t)$ varies with respect to time (as we considered in Section \ref{sec:implicit-adaptive-case}),
the result is still valid.
This is very important because the fixed $\gamma$ can be seen as, in a sense, the average of the unknown
$\gamma(t)$ over time.

Similar to Proposition \ref{proposition-adaptive-1}, we can have

\begin{proposition}
\label{prop:adaptive-control-2}
We have
\begin{eqnarray}
\int_{0}^{\infty}\sum_{v=1}^{n}|\cc_{v}(\tau)-\cc_{v}^{*}|d\tau
&\le&\frac{1}{\min_{v}P_{v}(\eta-1-\gamma\lambda_{1})}
\big[\frac{1}{2}\sum_{v=1}^{n}P_{i}|\cc_{v}(0)-\cc_{v}^{*}|^{2} \nonumber\\
&&+\frac{1}{2\rho}\sum_{v=1}^{n}
|\beta_{v}(0)-\beta_{v}^{*}|^{2}\big]\label{est3}.
\end{eqnarray}
\end{proposition}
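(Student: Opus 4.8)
The plan is to mirror the proof of Proposition~\ref{proposition-adaptive-1}: reuse the Lyapunov function and the differential inequality already assembled inside the proof of Theorem~\ref{theorem-control-to-specified-state}, integrate that inequality in time, and invoke nonnegativity of the Lyapunov function. Concretely, I would start from the quadratic candidate
\[
V(\cc,\tilde\beta)=\frac{1}{2}\sum_{v=1}^{n}P_{v}[\cc_{v}-\cc_{v}^{*}]^{2}+\frac{1}{2\rho}\sum_{v=1}^{n}P_{v}(\beta_{v}-\beta_{v}^{*})^{2},
\]
whose value at $t=0$ is exactly the bracketed quantity on the right of (\ref{est3}) (up to the weight $P_v$ in the second sum), and recall that the computation in Theorem~\ref{theorem-control-to-specified-state} yields, writing $z_v(t)=|\cc_v(t)-\cc_v^*|$,
\[
\frac{dV}{dt}\le\big[1+\gamma\lambda_{1}-\eta-\min_{v}\beta_{v}^{*}\big]\,z^{\top}Pz\le-(\eta-1-\gamma\lambda_{1})\sum_{v=1}^{n}P_{v}z_{v}(t)^{2},
\]
the last step using $\beta_v^{*}\ge0$ and the standing hypothesis $\eta-1-\gamma\lambda_{1}>0$. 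First I would integrate this over $[0,t]$, use $V(t)\ge0$ to get $\int_{0}^{t}\sum_{v}P_{v}z_{v}(\tau)^{2}\,d\tau\le V(0)/(\eta-1-\gamma\lambda_{1})$, let $t\to\infty$, and bound $P_v\ge\min_vP_v$ on the left, which puts the denominator $\min_vP_v(\eta-1-\gamma\lambda_{1})$ of (\ref{est3}) in place. This portion is routine and is the precise analogue of the integration carried out in Proposition~\ref{proposition-adaptive-1}.

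The genuine difficulty — and the step I expect to be the main obstacle — is passing from the time-integral of the \emph{squared} deviation $\sum_v z_v^2$, which the quadratic Lyapunov estimate naturally delivers, to the time-integral of the \emph{linear} deviation $\sum_v z_v$ appearing in (\ref{est3}). Since $\cc_v,\cc_v^{*}\in[0,1]$ one has $z_v\in[0,1]$ and hence $z_v^2\le z_v$; this is the wrong direction, so an upper bound on $\int_{0}^{\infty}\sum_v z_v^2$ does not by itself cap $\int_{0}^{\infty}\sum_v z_v$, and over an infinite horizon the two are genuinely distinct (a polynomially decaying deviation can be square-integrable in time yet fail to be integrable). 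The linear bound must therefore exploit more than the quadratic dissipation: it must use the \emph{linear} restoring action contributed by $w_v=\eta[\cc_v^{*}-\cc_v(t)]$ in (\ref{eq-adaptive-control-2}), which alone pushes $z_v$ to $0$ at an exponential rate.

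To capture that linear restoring action I would instead run the argument with the $L^1$-type surrogate $\Phi=\sum_v P_v|\cc_v-\cc_v^{*}|+\frac{1}{2\rho}\sum_v P_v(\beta_v-\beta_v^{*})^2$ and work with its upper Dini derivative (legitimate because $|\cdot|$ is nonsmooth only on a negligible set of times). Along trajectories the $w_v$ term then produces the clean linear dissipation $-\eta\sum_v P_v|\cc_v-\cc_v^{*}|$; the product-difference term is controlled linearly by the estimate already established in Theorem~\ref{theorem-control-to-specified-state}, reweighted by a left-Perron choice of $P$ so that the edge contribution is again governed by $\gamma\lambda_{1}$; and $\beta_v^{*}\ge0$ only helps. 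Integrating $D^{+}\Phi\le-(\eta-1-\gamma\lambda_{1})\sum_v P_v|\cc_v-\cc_v^{*}|$ would then give (\ref{est3}) directly.

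The one residual term requiring care is the coupling between the $\beta_v$-adaptation and the sign factor $\mathrm{sgn}(\cc_v-\cc_v^{*})$. In the quadratic computation the control law $\dot\beta_v=\rho(\cc_v-\cc_v^{*})\cc_v$ cancels the $\beta_v\cc_v$ contribution exactly through its $(\cc_v-\cc_v^{*})$ factor; against the sign function this cancellation is only partial and leaves a remainder proportional to $P_v(\beta_v-\beta_v^{*})\,\mathrm{sgn}(\cc_v-\cc_v^{*})\,\cc_v(|\cc_v-\cc_v^{*}|-1)$. Bounding this remainder — for instance absorbing it via $|\cc_v-\cc_v^{*}|\le1$ together with the convergence $\beta_v(t)\to\beta_v^{*}$ furnished by the LaSalle analysis on the invariant set — is the crux of the argument, and the place where I would concentrate the effort; it is also where the tension between the quadratic initial datum on the right of (\ref{est3}) and the linear integrand on the left must be reconciled.
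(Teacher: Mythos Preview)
The paper gives no explicit proof of this proposition: it merely says ``Similar to Proposition~\ref{proposition-adaptive-1}, we can have\ldots'' and then states (\ref{est3}). The intended argument is therefore exactly your first step: take the differential inequality already obtained inside the proof of Theorem~\ref{theorem-control-to-specified-state},
\[
\frac{dV}{dt}\le z^{\top}\big[1+\gamma\lambda_{1}-\eta-\min_{v}\beta_{v}^{*}\big]Pz
\le -(\eta-1-\gamma\lambda_{1})\sum_{v}P_{v}z_{v}^{2},
\]
integrate in time, use $V\ge 0$, and pull out $\min_{v}P_{v}$. Nothing more than that is meant.

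You have correctly noticed that this argument bounds $\int_{0}^{\infty}\sum_{v}|\cc_{v}(\tau)-\cc_{v}^{*}|^{2}\,d\tau$, \emph{not} the first-power integrand written in (\ref{est3}); since $z_{v}\in[0,1]$ the square is smaller, so the quadratic estimate does not imply the linear one. Given that the right-hand side of (\ref{est3}) is precisely $V(0)/[\min_{v}P_{v}(\eta-1-\gamma\lambda_{1})]$ (modulo the evident typos $P_{i}$ for $P_{v}$ and the missing $P_{v}$ in the second sum), the most reasonable reading is that the integrand in (\ref{est3}) was meant to be $|\cc_{v}(\tau)-\cc_{v}^{*}|^{2}$; then the whole thing is a two-line corollary of the computation already done, exactly parallel to Proposition~\ref{proposition-adaptive-1}. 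Your $L^{1}$-surrogate route is an attempt to rescue the literal statement, and the obstruction you flag --- the adaptation term $\dot\beta_{v}=\rho(\cc_{v}-\cc_{v}^{*})\cc_{v}$ no longer cancels against a sign factor the way it does against $(\cc_{v}-\cc_{v}^{*})$ --- is real and is not something the paper addresses. I would not pursue it: the paper's own argument is the quadratic one, and the discrepancy with the printed integrand is on the paper's side, not yours.
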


\noindent{\bf Physical meanings of Proposition \ref{prop:adaptive-control-2}}. The above proposition offers the following insights:
The larger $\rho$, the larger $\eta$, the smaller $\lambda_{1}$,
the smaller $\gamma$, the smaller differential between $\cc_{v}(0)$ and
$\cc_{v}^{*}$, or the smaller the differential between $\beta_{v}(0)$ and $\beta_{v}^{*}$,
the faster the virus spreading will die out.

\section{Simulation Study}
\label{sec:simulation-study-alpha-eq-0}

We conduct simulation to complement our analytic study for two purposes.
First, we want to confirm our analytical results offered in Section \ref{sec:model-alpha-eq-0}.
Second, we want to draw some relevant observations that are not offered by our analytic results.
Such observations may guide future studies of analytic models (e.g., how to enhance them so
that other useful insights may be obtained analytically).

As mentioned above, our model is very general because it accommodates dynamical graph topology $G(t)=(V,E(t))$.
However, it's not clear at this stage
how to appropriately define a physically meaningful way according to which
the topology changes. Therefore we leave the full-fledged characterization (beyond what is implied
by our analytical results) of the impact of dynamical topology to future work.
We conducted simulations using both synthetic (regular, random, and power-law) graphs
and a real network graph. Due to space limitation, here we report the simulation results in the latter
case (but all the simulation results are consistent). The real network graph $G=(V,E)$
is based on the Oregon router views (available from {\tt http://topology.eecs.umich.edu/data.html}),
where $|V|=11,461$ representing AS peers, $|E|=32,730$ representing links between the AS peers.
The largest eigenvalue of the corresponding adjacency matrix is $\lambda_1=75.2407$.

\subsection{Methodology}

Our simulation is conducted in an event-driven fashion.
For the purpose of studying the dynamics under our adaptive control strategies, we need to measure $i_v(t)$,
the probability that node $v\in V$ is infected at time $t$. This parameter can be
obtained in a tedious way (i.e., by conducting for example 100 simulation runs in parallel, rather than in sequence,
because we need to count the number of times each node $v$ is infected at each time step).
A much more simpler way however is to use our model
formula Eq. (\ref{Eq.2}) to compute $i_v(t)$ instead, as long as the model is accurate.
To confirm the accuracy of the model, we compare it with simulation.
For simplicity, we let both $\beta(t)$ and $\gamma(t)$ be some periodic functions with period $T$, which means that
both attack and defense vary with respect to time. We consider
three settings: $\beta(t)$ and $\gamma(t)$ being synchronous, asynchronous, or anti-synchronous
because we want to observe whether, and if so to what extent, the degree of (a)synchrony has an impact on the outcome.
Specifically, we consider two sets of parameters:
$\beta(t)\in \{0.3,0.5\}$ and $\gamma(t) \in \{0.003,0.007\}$
according to the functions shown in Figure \ref{fig:beta-gamma-t-model};
$\beta(t)\in \{0.005,0.015\}$ and $\gamma(t) \in \{0.003,0.007\}$ in the same fashion.
In the asynchronous case, we let $\beta_v(t)$ is $T/4$ behind $\gamma_v(t)$
because cure often comes after attack is identified.
To draw insights into whether the period $T$ has an impact on the outcome, we
consider $T = 8, ~16$, respectively.
In any case, it is clear that $\beta(t)$ is an implicit function of $\gamma(t)$.

\begin{figure*}[ht]
\centering
\subfigure[synchronous setting]
{\label{fig:beta-gamma-t-model-synchronous-case}
\includegraphics[width=.32\textwidth]{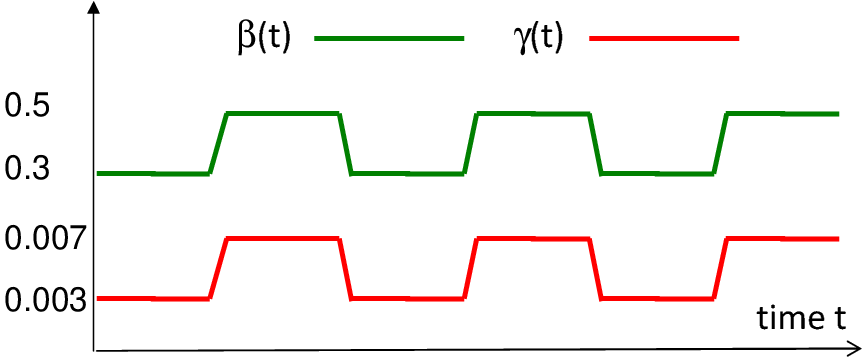}}
\subfigure[asynchronous setting]
{\label{fig:beta-gamma-t-model-asynchronous-case}
\includegraphics[width=.32\textwidth]{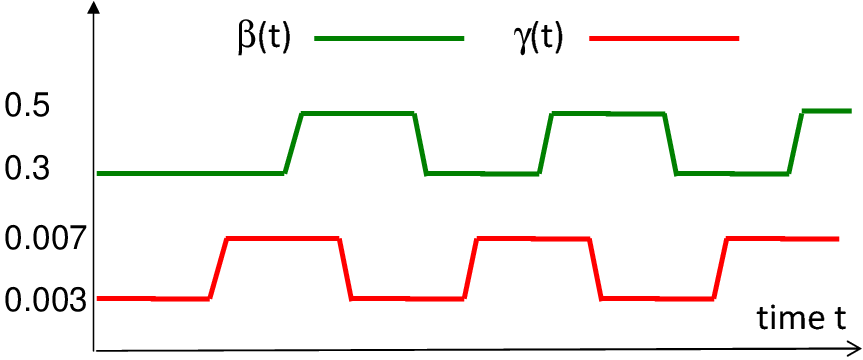}}
\subfigure[anti-synchronous setting]
{\label{fig:beta-gamma-t-model-antisynchronous-case}
\includegraphics[width=.32\textwidth]{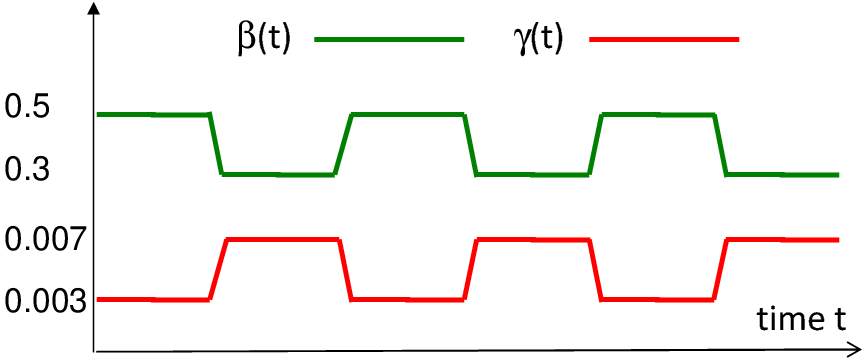}}
\caption{Examples of synchronous, asynchronous, and counter-synchronous $\beta(t)$ and $\gamma(t)$}
\label{fig:beta-gamma-t-model}
\end{figure*}

Figure \ref{fig:Model-accuracy} plots the curves obtained by simulation and by model computing
in the case of both $\beta_v(t)$ and $\gamma_v(t)$ have period $T=8$ and $T=16$ (as shown in Figure \ref{fig:beta-gamma-t-model}).
In each graph, we let the virus initially infect 2,292 or 20\% vertices that are randomly selected;
note that the degree of initial infection does not impact whether the virus spreading will die out or not.
Since the model computing and simulation results (obtained as the average of 50 simulation runs)
match almost perfectly no matter the virus spreading will die out or not,
we will use simulation and model computing interchangeably. Since the same phenomenon applies to
both cases of $T=8$ and $T=16$, in what follows we only report the case of $T=8$.
Again, the accuracy result allows us to obtain $i_v(t)$ via model computing in the process of confirming
the analytical results of our adaptive control strategies.

\begin{figure*}[ht]
\centering
\subfigure[T=8]
{\label{fig:model-accuracy-T=8}
\includegraphics[width=.48\textwidth]{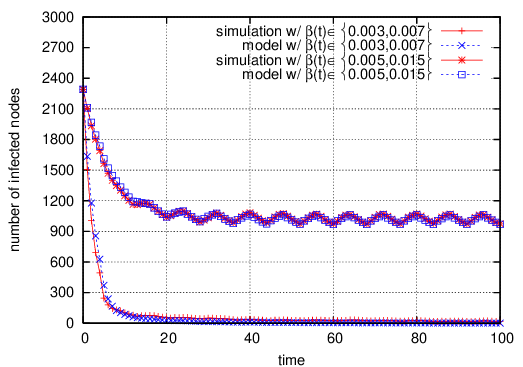}}
\subfigure[T=16]
{\label{fig:model-accuracy-T=16}
\includegraphics[width=.48\textwidth]{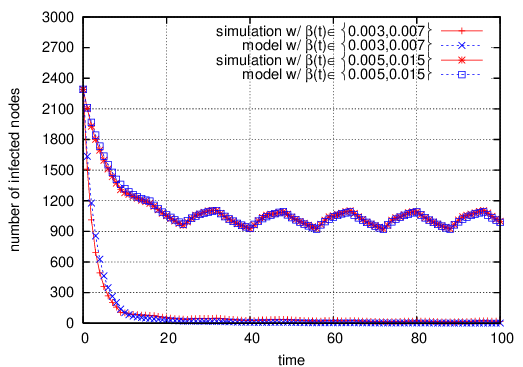}}
\caption{Model accuracy}
\label{fig:Model-accuracy}
\end{figure*}

\subsection{Confirmation of the sufficient conditions in the semi-adaptive scenario}

In this section, we use the aforementioned Oregon graph to confirm
our analytical results presented in Section \ref{sec:implicit-adaptive-case}.
We confirm Theorem \ref{thm4} and Corollary \ref{theorem-main-result-2} because they
offer succinct sufficient conditions under which the virus spreading dies out.

\subsubsection{Confirmation of Theorem \ref{thm4}}

For the graph, we let the virus initially infect 2,292 or 20\% randomly selected nodes,
and consider three cases between the model's input parameter --- synchrony, asynchrony, and anti-synchrony
as illustrated in Figure \ref{fig:beta-gamma-t-model}.
Since Theorem \ref{thm4} has two parts, we confirm them respectively.

\begin{figure*}[ht]
\centering
\subfigure[Parameter set (i)]
{\label{fig:global-analysis-regular-graph-case-1-virus-th4-part1-rl4-1}
\includegraphics[width=.48\textwidth]{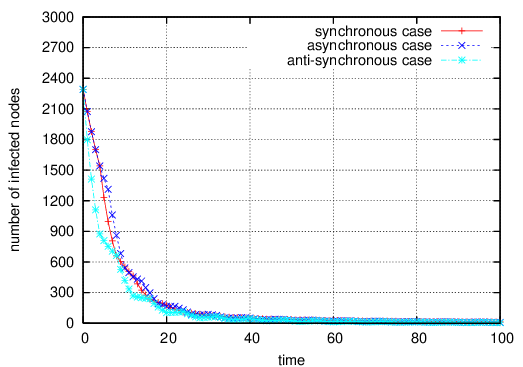}}
\subfigure[Parameter set (ii)]
{\label{fig:global-analysis-random-graph-case-1-virus-th4-part1-rl4-2}
\includegraphics[width=.48\textwidth]{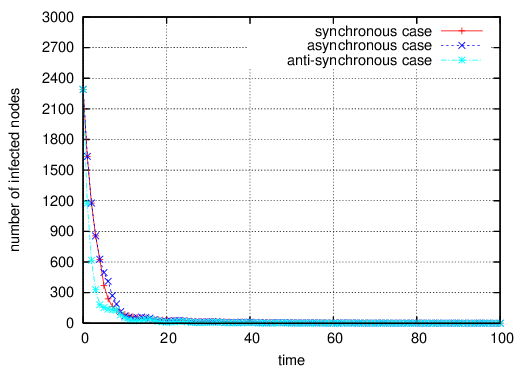}}
\caption{Confirmation of sufficient condition under which virus spreading dies out ($T = 8$)}
\label{fig:J-1-Theorem-4-case1-T=8}
\end{figure*}

\noindent{\bf Case 1: Confirmation of the sufficient condition under which the virus spreading will die out}.
We consider two sets of parameters:
(i) $\beta(t)\in \{0.3,0.5\}$ and $\gamma(t) \in \{0.003,0.007\}$;
(ii) $\beta(t)\in \{0.1,0.22\}$ and $\gamma(t) \in \{0.001,0.003\}$.
Both functions, $\beta(t)$ and $\gamma(t)$, have period $T=8$.
Both parameter sets satisfy the sufficient condition of Theorem \ref{thm4}, namely $\lambda_1 < \bar{\beta}/\bar{\gamma}$,
which means that the virus spreading will die out.
Figure \ref{fig:J-1-Theorem-4-case1-T=8} plots the dynamics of the numbers of infected nodes with respect to time.
From Figure \ref{fig:J-1-Theorem-4-case1-T=8} we can draw the following observations.
First, the virus spreading does die out at about the 50th and 25th step, respectively, which confirms the sufficient condition
under which the virus spreading will die out. It is an interesting future work to quantitatively
characterize how the speed of convergence (i.e., dying out) depends upon functions $\beta(t)$ and $\gamma(t)$.
Second, it is counter-intuitive and interesting that the virus spreading is somewhat more effectively
defended against in the anti-synchronous case than in the synchronous case, which is in turn more
effectively defended against than in the asynchronous case. More studies
are needed in order to explain this phenomenon.
Third, the curves are convex, meaning that cure is more effective in the early stage
of the attack-defense dynamics than in the later stage. For example, it takes
a shorter period of time to reduce the infection from 2,292 nodes to 100 nodes
than to reduce the infection from 100 nodes to zero nodes (i.e., dying out).

\smallskip

\noindent{\bf Case 2: Confirmation of the condition under which the virus spreading may not die out}.
We consider two sets of parameters:
(i) $\beta(t)\in \{0.2,0.4\}$ and $\gamma(t) \in \{0.003,0.007\}$;
(ii) $\beta(t)\in \{0.05,0.15\}$ and $\gamma(t) \in \{0.001,0.003\}$.
Both functions, $\beta(t)$ and $\gamma(t)$, have period $T=8$.
Both parameter sets do not satisfy the sufficient condition of Theorem \ref{thm4}
because $\lambda_1 > \bar{\beta}/\bar{\gamma}$, which means that the virus spreading does not die out
in some initial infection configurations.

\begin{figure*}[ht]
\centering
\subfigure[Parameter set (i)]
{\label{fig:global-analysis-regular-graph-case-1-virus-th4-part2-rl4-1}
\includegraphics[width=.48\textwidth]{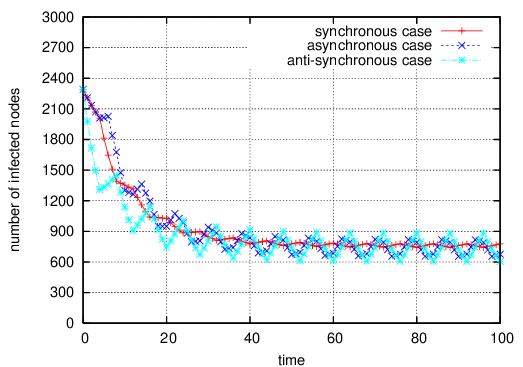}}
\subfigure[Parameter set (ii)]
{\label{fig:global-analysis-random-graph-case-1-virus-th4-part2-rl4-2}
\includegraphics[width=.48\textwidth]{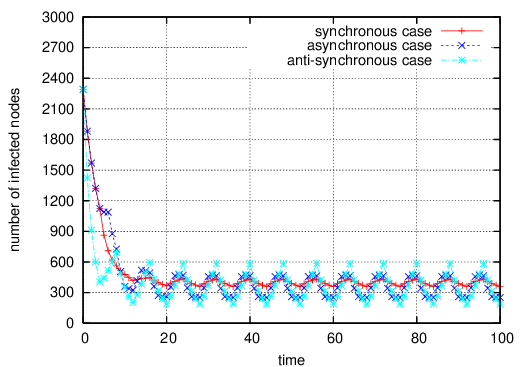}}
\caption{Confirmation of virus spreading not dying out ($T = 8$)}
\label{fig:J-1-Theorem-4-case2-T=8}
\end{figure*}

Figure \ref{fig:J-1-Theorem-4-case2-T=8} plots the dynamics, from which we draw the following observations.
First, the virus spreading does not die out, which confirms Theorem \ref{thm4}.
Second, all the curves exhibit periodic behaviors, but the extent of oscillation in the
case of anti-synchrony is more significant than in the case of asynchrony, which
in turn is more significant than in the case of synchrony. This means that the degree of
synchrony between $\beta(t)$ and $\gamma(t)$ will impact the outcome when the virus spreading does not
die out. Third, comparing Figures \ref{fig:global-analysis-regular-graph-case-1-virus-th4-part2-rl4-1}
and \ref{fig:global-analysis-random-graph-case-1-virus-th4-part2-rl4-2}, we observe that,
under the same synchrony, the outcome will depend on functions $\beta(t)$ and $\gamma(t)$.
More studies are needed to characterize these dependence relationships.

\subsubsection{Confirmation of Corollary \ref{theorem-main-result-2}}

Corollary \ref{theorem-main-result-2} gives an even more succinct sufficient condition
under which the virus spreading will die out. For the graph, we let the virus initially infect
2,291 or 20\% randomly selected nodes.
For the case the sufficient condition in Corollary \ref{theorem-main-result-2} is satisfied,
we consider two sets of parameters that are uniformly chosen at random from certain intervals:
(i)  $\beta(t)\in [0.7, 0.9]$ and $\gamma(t) \in [0.006, 0.0014]$;
(ii) $\beta(t) \in [0.1, 0.3]$ and $\gamma(t) \in [0.0015, 0.0035]$.
In each parameter setting, the sufficient condition stated in Corollary \ref{theorem-main-result-2} is satisfied,
namely $\lambda_1 < \E(\beta(0))/\E(\gamma(0))$, which means that the virus spreading will die out.

For the case the sufficient condition in Corollary \ref{theorem-main-result-2} is not satisfied,
we consider two sets of parameters that are uniformly chosen at random from certain intervals:
(i) $\beta(t)  \in [0.4, 0.6]$ and   $\gamma(t) \in [ 0.006, 0.014]$;
(ii) $\beta(t) \in [0.05, 0.15]$ and $\gamma(t) \in [0.0015, 0.0035]$.
In each parameter setting, the sufficient condition stated in Corollary \ref{theorem-main-result-2} is
not satisfied because $\lambda_1 > \E(\beta(0))/\E(\gamma(0))$,
and thus the analytical result says that the virus spreading does not die out
in some initial infection configurations.

\begin{figure*}[ht]
\centering
\subfigure[The sufficient condition is satisfied]
{\label{fig:global-analysis-regular-graph-case-1-virus-th5-part1-J1-rl4}
\includegraphics[width=.48\textwidth]{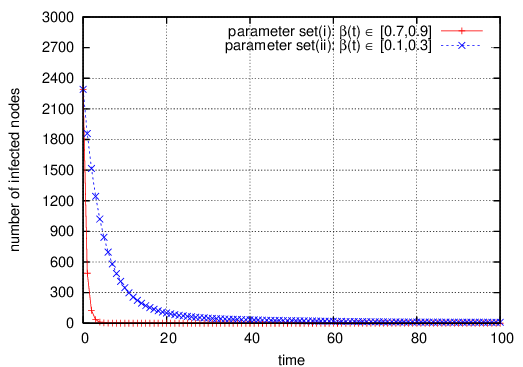}}
\subfigure[The sufficient condition is not satisfied]
{\label{fig:global-analysis-random-graph-case-1-virus-th5-part2-J1-rl4}
\includegraphics[width=.48\textwidth]{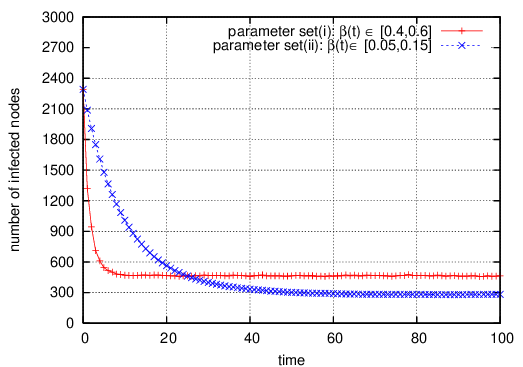}}
\caption{Confirmation of Corollary \ref{theorem-main-result-2} ($T = 8$)}
\label{fig:J-1-corollary-1}
\end{figure*}

Figure \ref{fig:global-analysis-regular-graph-case-1-virus-th5-part1-J1-rl4} plots the
dynamics of the number of infected nodes with respect to time when the sufficient condition is satisfied.
From it we can draw the following observations.
First, the virus spreading does die out, as predicted by Corollary \ref{theorem-main-result-2}.
Second, the larger the $\beta(t)$, the more effective the defense against the virus spreading.
Third, all the curves are convex, meaning that it takes a shorter period of time
to significantly reduce the number of infected nodes (e.g., from 2,291 to 50)
than to making the virus spreading die out (e.g., from 50 to zero).

Figure \ref{fig:global-analysis-random-graph-case-1-virus-th5-part2-J1-rl4}
plots the dynamics of the number of infected nodes with respect to time when the sufficient condition is not satisfied.
From it we can draw the following observations.
First, the virus spreading does not die out, which confirms Corollary \ref{theorem-main-result-2}.
Second, the larger (in a stochastic sense) the $\beta(t)$, the earlier the system will converge to the steady state.
However, the ultimate degree of infection does not depend on $\beta(t)$, but rather on
$\E(\beta(0))/\E(\gamma(0))$, which means that $\E(\beta(0))/\E(\gamma(0))$ may be used as an indicator of
steady-state infection when the virus spreading does not die out. It is an interesting future work
to rigorously characterize this phenomenon.

\subsection{Confirmation of the controllability in the fully-adaptive scenario}
\label{sec:confirmation-adaptive-control}

\subsubsection{Confirmation of Theorem \ref{thm_stablize1}}

Theorem \ref{thm_stablize1} states that even if we do not know $\gamma(t)$ but we
may be able to observe $i_v(t)$ and may be able to adjust the defense as needed,
following its control strategy will cause the dying out of the virus spreading.
To compare the effects of adaptive control and semi-adaptive control,
in our simulation study, we also used the periodical functions $\beta(t) \in \{0.375, 0.40\}$
and $\gamma(t) \in \{0.003, 0.007\}$
with period $T=8$ as illustrated in Figure \ref{fig:beta-gamma-t-model}.
These parameters satisfy the sufficient condition in Theorem \ref{thm4}, which means
that the virus spreading will die out as we discussed above.
To ensure comparability, we also let the virus initially infect 2,291 or 20\% randomly selected nodes.
As mentioned before, since the anti-synchronous defense is somewhat
more effective than the synchronous and asynchronous defenses,
we will compare it with the outcome of the adaptive control strategy.

\begin{figure*}[ht]
\centering
\subfigure[Fully-adaptive control]
{\label{fig:fully-adaptive control-th4}
\includegraphics[width=.48\textwidth]{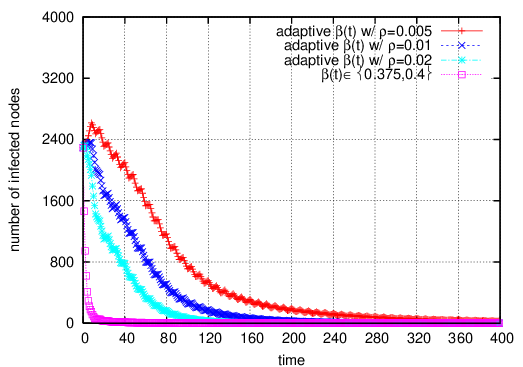}}
\subfigure[Comparison of cure capabilities]
{\label{fig:cure capabilities-th4}
\includegraphics[width=.48\textwidth]{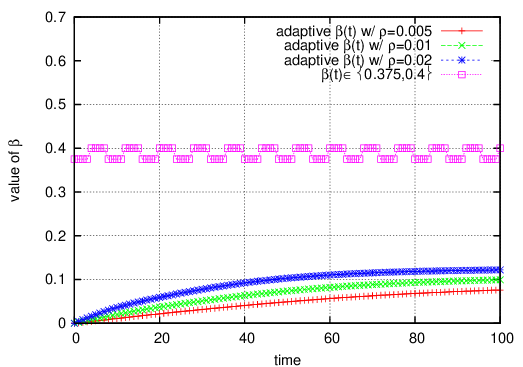}}
\caption{Confirmation of semi-adaptive dying out vs. fully-adaptive control}
\label{fig:confirming-adaptive-control-1}
\end{figure*}

Figure \ref{fig:fully-adaptive control-th4} plots the dynamics of the number of infected nodes
with respect to time in the following four cases: the adaptive control parameter $\rho=0.005$;
the adaptive control parameter $\rho=0.01$; the adaptive control parameter $\rho=0.02$;
the comparison dynamics corresponding to the anti-synchronous case
with $T=8$ periodical function $\beta(t) \in \{0.375, 0.40\}$.
We draw the following observations.
First, $\rho$ plays a crucial role in indicating the rate at which the virus spreading dies out.
For example, for $\rho=0.02$, it takes only about 80 steps to reduce the number of infected nodes from 2,291 to 160
(nevertheless it takes another 60 steps to kill the virus spreading, namely to reduce the number of infected nodes from 160 to zero);
for $\rho=0.01$, it takes about 130 steps to reduce the number of infected nodes from 2,291 to 160
(nevertheless it takes about another 100 steps to kill the virus spreading, namely to reduce the number of infected
nodes from 160 to zero).
This also confirms the physical meanings of Proposition \ref{proposition-adaptive-1} discussed above.

Second, Figure \ref{fig:fully-adaptive control-th4} indicates that
for all $\rho=0.005$, $\rho=0.01$ and $\rho=0.02$, the fully-adaptive defenses are less effective than
the semi-adaptive defense represented by $\beta(t) \in \{0.375, 0.40\}$.
As we show in Figure \ref{fig:cure capabilities-th4},
this is caused by the fact that the semi-adaptive $\beta(t)$ is much larger than the adaptive $\beta(t)$.
This means that the sufficient condition in the semi-adaptive case, under which the virus spreading dies out,
may be significantly beyond being necessary. In contrast, the fully-adaptive control strategy is much more
``cost-effective" because larger $\beta(t)$ will likely cause a higher cost.

\subsubsection{Confirmation of Theorem \ref{theorem-control-to-specified-state}}

Theorem \ref{theorem-control-to-specified-state} states that even if we do not know $\gamma(t)$ but we
may be able to observe $i_v(t)$ and may be able to adjust the defense (but cannot kill the virus spreading),
then following its control strategy will cause the containment of the virus spreading.
In our simulation study, we used the periodical function $\gamma(t) \in \{0.0005,0.001\}$
with period $T=8$ similar to what was shown in Figure \ref{fig:beta-gamma-t-model}. This input parameter is not used
in our adaptive control algorithm, rather it is merely for the purpose of comparison
to the sufficient condition in Theorem \ref{thm4}, which requires the $\beta_v(t)$
satisfy certain property (for example, we use $\beta(t) \in \{0.01, 0.02\}$, meaning that the virus spreading does not
die out as predicated),
and the $\beta_v(t)$ derived from our adaptive control strategy.
To ensure the comparability, we also let the virus initially infect 2,291 or 20\% randomly selected nodes.
As mentioned before, since the anti-synchronous defense is more effective than the synchronous and asynchronous defenses,
we will compare it with the outcome of the adaptive control algorithm.

\begin{figure*}[ht]
\centering
\subfigure[Fully-adaptive control]
{\label{fig:fully-adaptive control-th5}
\includegraphics[width=.48\textwidth]{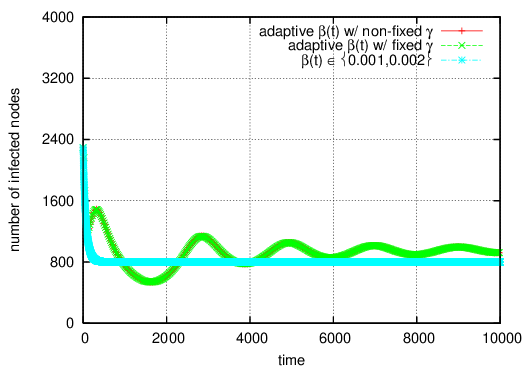}}
\subfigure[Comparison of cure capabilities]
{\label{fig:cure capabilities-th5}
\includegraphics[width=.48\textwidth]{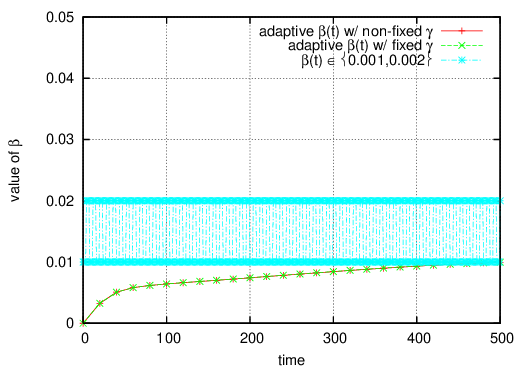}}
\caption{Confirmation of the virus spreading containment via fully-adaptive control (the red-color curves are
hidden behind the green-color curves)}
\label{fig:confirming-adaptive-control-2}
\end{figure*}

In our simulation we set $i^*=0.1$ (i.e., we want to contain the degree of infection to 10\%)
and $u_v=0$ (i.e., a special case of Theorem \ref{theorem-control-to-specified-state}).
Figure \ref{fig:fully-adaptive control-th5} plots the dynamics of the number of infected nodes
with respect to time in the following three cases: the adaptive control parameter $\rho=0.001$;
the adaptive control parameter $\rho=0.001$ with fixed $\bar{\gamma}(t) = (0.0005+0.001)/2=0.00075$
as specified in Theorem \ref{theorem-control-to-specified-state});
the comparison dynamics corresponding to the anti-synchronous case
with $T=8$ periodical function $\beta(t) \in \{0.01, 0.02\}$.
We draw the following observations.
First, the control strategy does contain the infection to the pre-determined level of
$i^*=0.1$ or 10\% infection. Moreover,
the adaptive control strategy is robust because perturbation in $\gamma$ does not
fundamentally change the dynamics behavior.
This also confirms the physical meanings of Proposition \ref{prop:adaptive-control-2} discussed above.

Second, Figure \ref{fig:fully-adaptive control-th5} indicates
that the adaptive defenses are slightly less effective in defending against the virus spreading
than the defense of $T=8$ periodical function $\beta(t) \in \{0.01, 0.02\}$.
As we show in Figure \ref{fig:cure capabilities-th5}, the adaptive
control strategy can be much more ``cost-effective" because it leads to significantly smaller $\beta(t)$.

\section{Conclusion}
\label{sec:conclusion}

We have presented a novel dynamical systems model for studying both semi-adaptive and fully-adaptive
defenses against virus spreading. For semi-adaptive defense, we give general as well as succinct
sufficient conditions under which the virus spreading will die out. For fully-adaptive defense, we characterize two
adaptive control strategies under which the virus spreading will die out or will be contained
to a desired level of infection.
Our analytical results are confirmed with simulation study.

This paper brings a range of open questions for future research. In addition to those
mentioned in the body of the paper, here are more examples:
What are the necessary conditions under which the virus spreading will die out?
What are the optimal adaptive control strategies?

\smallskip

\noindent{\bf Acknowledgement}. We thank the anonymous reviewers for their useful comments,
and Raj Boppana for helpful discussion on the simulation.

This work was supported in part by AFOSR, AFOSR MURI, ONR, and UTSA.
The views and conclusions contained in the article are those of the authors and should
not be interpreted as, in any sense, the official policies or endorsements of the government or
the agencies.

%\bibliographystyle{acmtrans}
%\bibliography{c:/papers/crypto,c:/papers/complex-network,c:/papers/communication,c:/papers/simulation,c:/papers/adaptive}

%\begin{received}
%Received ????;
%accepted ????
%\end{received}

\end{document}